\numberwithin{equation}{section}
\newcommand\goth{\mathfrak}
\newcommand\cc{\mathbb{C}}
\newcommand\rr{\mathbb{R}}
\newcommand\zz{\mathbb{Z}}
\newcommand\bbS{\mathbb{S}}
\newcommand\GG{\mathbb{G}}
\newcommand\hhh{\mathbb{H}}
\newcommand\da{{\partial}_{\!\mbox{\tiny A}}}
\newcommand\ddd{{/\!\!\!\partial}}
\newcommand\dda{{/\!\!\!\partial}_{\!\!\mbox{\tiny A}}}
\newcommand\ot{\otimes}
\newcommand\op{\oplus}
\DeclareMathOperator{\Div}{div} \DeclareMathOperator{\grad}{grad}  \DeclareMathOperator{\Spin}{Spin} \DeclareMathOperator{\SO}{SO} 
\newcommand{\ACI}{\mathfrak{I}} \newcommand{\ic}{i}
\newtheorem{lemma}{Lemma}[section]
\newtheorem{proposition}{Proposition}[section]
\begin{document}

\title[]{The functional of super Riemann surfaces -- a ``semi-classical'' survey}
\author{Enno Keßler \and Jürgen Tolksdorf}
\address{Max Planck Institute for Mathematics in the Sciences, Leipzig, Germany}
\email{kessler@mis.mpg.de}
\email{Juergen.Tolksdorf@mis.mpg.de}

\thanks{The research leading to these results has received funding from the European Research Council under the European Union's Seventh Framework Program (FP7/2007--2013) / ERC grant agreement~nº267087.}

\dedicatory{In honor of Prof.\ E.\ Zeidler’s 75th birthday}

\begin{abstract}
	This article provides a brief discussion of the functional of super Riemann surfaces from the point of view of classical (i.e.\ not ``super-'') differential geometry.
	The discussion is based on symmetry considerations and aims to clarify the ``borderline'' between classical and super differential geometry with respect to the distinguished functional that generalizes the action of harmonic maps and is expected to play a basic role in the discussion of ``super Teichm\"uller space''.
	The discussion is also motivated by the fact that a geometrical understanding of the functional of super Riemann surfaces from the point of view of super geometry seems to provide serious issues to treat the functional analytically.
\end{abstract}

\maketitle
\tableofcontents

\noindent
{\bf PACS Classification:}
02.40.Ky, 03.50.Kk, 03.65.Sq, 11.30.Pb, 12.60.Jv

\noindent
{\bf MSC Classification:}
58A50, 14H55, 32G15\\[.08cm]

\noindent
{\bf Keywords:} Clifford Modules, Dirac Operators, Torsion, Non-linear $\sigma-$models, Super Riemann
surfaces, Supersymmetry.

\section{Introduction}
In this section we give a brief account on the functional of ``super harmonic maps'', which will be discussed afterwards in some detail from a classical differential geometrical setting in view of Dirac operators and symmetry considerations.
Accordingly, in this section we are intentionally sketchy with the aim to only provide some motivation for what is following without cluttering the brief outline with too many technical details.
A more rigorous and clear exposition, especially with respect to the mappings and geometrical constructions, as well as the notation used in this section, will be postponed to subsequent sections.

To begin with let $(M_k,g_k)$ be Riemannian manifolds $(k=1,2)$ where $M_1$ is assumed to be closed compact and orientable. Also, let $\varphi\colon M_1\rightarrow M_2$ be a smooth mapping.

The \emph{functional of harmonic maps}
\begin{equation}
\label{ahm}
	\mathcal{I}_{\mbox{\tiny H}}(\varphi) \coloneqq \int_{M_1}\|d\varphi\|^2\,dvol(g_1)
\end{equation}
is known to play a basic role in geometric analysis.
Here,
\begin{equation}
	\|\alpha\ot w\|^2(t) \coloneqq g_1^\ast|_t(\alpha,\alpha)\,g_2|_{\varphi(t)}(w,w)
\end{equation}
for all $\alpha\ot w\in\Omega^1(M_1,\varphi^\ast TM_2)$, where $g_1^\ast$ denotes the metric on the co-tangent bundle of $M_1$.
The study of the functional~\eqref{ahm} has a long history, actually (e.g.\ see~\cite{ES-HMRM}, \cite{JJ-RGGA} and references therein).

In fact, for $M_1 \coloneqq [0,1]\subset\rr$, the minimizers of~\eqref{ahm} are but the geodesics of $(M_2,g_2)$.
Furthermore, the functional of harmonic maps plays a basic role in the analytical treatment of minimal surfaces and generalizations thereof.

A particularity of the case where $M_1$ is two-dimensional, is that the functional~\eqref{ahm} is not only diffeomorphism invariant but also conformally invariant.
This crucial feature allows to apply the action functional~\eqref{ahm} to the study of Teichmüller space, which is a contractible covering of the moduli space of compact one dimensional complex manifolds.
For instance, for harmonic maps between surfaces the ``energy-momentum tensor'', defined by the variation of~\eqref{ahm} with respect to $g_1$, can be geometrically interpreted as a tangent vector to the Teichm\"uller space at the point \(M_1\).
Moreover, it can be shown using~\eqref{ahm} that the tangent space to the Teichmüller space at \(M_1\) can be identified with holomorphic quadratic differentials on \(M_1\).

The functional~\eqref{ahm} has then been extended, for instance, by including spinors, to the \emph{functional of Dirac harmonic maps} (c.f.~\cite{CJLW-DHM},~\cite{CJLW-RTEIDHM})
\begin{equation}
\label{adhm}
	\mathcal{I}_{\mbox{\tiny DH}}(\varphi,\psi) \coloneqq \int_{M_1}\Big(\|d\varphi\|^2 + \big<\psi,\ddd\psi\big>_{\!\mbox{\tiny$\mathcal{E}$}}\Big) dvol(g_1)\,.
\end{equation}
Here the field \(\psi\) is a section of the twisted spinor bundle $\pi_{\mbox{\tiny$\mathcal{E}$}}\colon \mathcal{E}\coloneqq\bbS\!\ot\!\varphi^\ast TM_2\rightarrow M_1$.
Consequently, one has to assume that $M_1$ is a spin manifold and there is a fiber metric $\big<\cdot,\cdot\big>_{\!\mbox{\tiny$\bbS$}}$ on a corresponding spinor bundle $\pi_{\mbox{\tiny$\bbS$}}\colon \bbS\rightarrow M_1$, such that the Dirac operator is symmetric.
The Dirac operator $\ddd$ is the ``quantized'' Clifford connection $\nabla^{\mbox{\tiny$\mathcal{E}$}}$ on the twisted spinor bundle \(\mathcal{E}\) that arises from the Levi-Civita connections on $(M_1,g_1)$ and $(M_2,g_2)$, respectively.
Also, the fiber metric reads $\big<\cdot,\cdot\big>_{\!\mbox{\tiny$\mathcal{E}$}} \equiv \big<\cdot,\cdot\big>_{\!\mbox{\tiny$\bbS$}}\ot\varphi^\ast g_2$.

In the study of Dirac harmonic maps it is decisive to use representations of the Clifford algebra \(Cl_{0,2}\) to construct the Dirac operator.
Otherwise, the real-valued \emph{``Dirac action''}
\begin{equation}
\label{da}
	\big<\!\!\big<\psi,\ddd\psi\big>\!\!\big> \coloneqq \int_{M_1}\big<\psi,\ddd\psi\big>_{\!\mbox{\tiny$\mathcal{E}$}}\, dvol(g_1)
\end{equation}
vanishes in the case where \(\dim M_1=2\).
Contrary to what is custom in the study of Dirac harmonic maps, we will work with representations of \(Cl_{2,0}\) and look at other ways to prevent the vanishing of the Dirac action~\eqref{da}.
The major challenge is that the Clifford algebra $Cl_{2,0}$ of the Euclidean plane $\rr^{2,0}$ has no skew-symmetric representation on the underlying \emph{real} spinor module.
In the physics real spinors are called \emph{Majorana spinors}.
We adopt this terminology.
The consequences of the vanishing of~\eqref{da} in case of Majorana spinors will be explained in some detail in what follows.
Especially, we shall discuss the usual way out of this flaw by the assumption of \emph{odd} (``anti-commuting'') spinors (see below).
Indeed, the functional~\eqref{da} becomes non-trivial even for real anti-commuting Majorana spinors on Riemann surfaces.
Furthermore, incorporating anti-commuting spinors, the action functional~\eqref{adhm} is not only invariant under diffeomorphims and conformal re-scaling of the metric $g_1$ (i.e. ``Weyl transformations'') but also with respect to the variations of~\eqref{adhm} that are defined to first order by so-called \emph{supersymmetry transformations}
\begin{equation}
\label{susytraf}
	\begin{split}
		\delta_{\goth s}\varphi &= \big<{\goth s},\psi\big>_{\!\mbox{\tiny$\bbS$}}\in\Gamma(M_1,\varphi^\ast TM_2)\,,\\
		\delta_{\goth s}\psi &= \gamma(d\varphi){\goth s}\in\Gamma(M_1,\mathcal{E})\,.
	\end{split}
\end{equation}
Here, the variational spinor fields ${\goth s}\in\Gamma_{\!\mbox{\tiny hol}}(M,\bbS)$ are restricted to arbitrary (odd) \emph{holomorphic} sections.

To avoid this restriction will bring us eventually to the even more enhanced (real-valued) \emph{functional of super harmonic maps}
\begin{gather}
\label{sadhm1}
	\mathcal{I}_{\mbox{\tiny SDH}}(\varphi,\psi,\chi,g_1) \coloneqq\\\nonumber
	\!\int_{M_1}\!\!\Big(\|d\varphi\|^2 + \big<\psi,\ddd\psi\big>_{\!\mbox{\tiny$\mathcal{E}$}}
		+ \big<\chi,{\goth q}(\chi)\big>_{\mbox{\tiny$T^\ast\!M_1\!\ot\!\bbS$}}\big<\psi,\psi\big>_{\!\mbox{\tiny$\mathcal{E}$}}
	+ 4\,\big<{\goth q}(\chi)(grad\varphi),\psi\big>_{\!\mbox{\tiny$\mathcal{E}$}}\Big)dvol(g_1)\,,
\end{gather}
with the section $\chi\in\Omega^1(M_1,\bbS)$ being termed \emph{gravitino}.
Again, the notation used will be explained in more detail in the next section.
Notice also that the functional of super harmonic maps is older than the functional of Dirac harmonic maps as it has been studied already in the seventies in the context of non-linear super symmetric sigma models and string theory (see~\cite{BdVH-LSRIASS}, \cite{DZ-CASS}).

Besides the diffeomorphism and conformal invariance (for $\dim(M_1) = 2)$, the functional~\eqref{sadhm1} carries an additional symmetry.
In fact, it is also invariant with respect to \emph{super Weyl transformations}
\begin{equation}
\label{swtraf}
	\chi(v) \mapsto \chi(v) + \gamma(v^\flat){\goth s}\,,
\end{equation}
for all tangent vectors $v\in TM_1$ and arbitrary spinor field ${\goth s}\in\Gamma(M_1,\bbS)$.

In the \emph{two-dimensional case} and for \emph{odd spinors}, the functional~\eqref{sadhm1} has the crucial property that it does not depend on the metric connection on \(M_1\) appearing in the Clifford connection $\nabla^{\mbox{\tiny$\mathcal{E}$}}$.
One may therefore replace $\ddd$ by the Dirac operator $\dda$ with \emph{torsion}.
As a consequence, the functional~\eqref{sadhm1} becomes invariant also with respect to the \emph{enhanced supersymmetry transformations}
\begin{align}
\label{susytraf1a}
	\delta_{\goth s}\varphi &\coloneqq \big<{\goth s},\psi\big>_{\!\mbox{\tiny$\bbS$}}\in \Gamma(M_1,\varphi^\ast TM_2)\,,\\
\label{susytraf1b}
	\quad \delta_{\goth s}\psi &\coloneqq \gamma\big(d\varphi - \big<\psi,\chi\big>_{\!\mbox{\tiny$\bbS$}}\big){\goth s}\in \Gamma(M_1,\mathcal{E})\,,\\
\label{susytraf1c}
	\delta_{\goth s} e_k &\coloneqq -4\big<\delta_{\mbox{\tiny$\Theta^\sharp$}}{\goth s},\chi(e_k)\big>_{\!\mbox{\tiny$\bbS$}}\in \Gamma(M_1,TM_1)\qquad(k=1,2)\,,\\
\label{susytraf1d}
	\delta_{\goth s}\chi &\coloneqq d_{\mbox{\tiny A}}{\goth s}\in \Omega^1(M_1,\bbS)\,,
\end{align}
provided $g_2$ is flat and the torsion factorizes by the gravitino as
\begin{equation}
\label{torfac}
	\begin{split}
		A &= \big<\gamma(\chi),\chi\big>_{\!\mbox{\tiny$\bbS$}}\in\Omega^1(M_1)\,.
	\end{split}
\end{equation}
Here, $e_1,e_2\in\Gamma(M_1,TM_1)$ denotes an arbitrary (oriented) local $g_1-$orthonormal frame.

Notice that in contrast to~\eqref{susytraf}, the supersymmetry transformations~\eqref{susytraf1a}--\eqref{susytraf1d} are generated by a completely arbitrary (odd) variational spinor field ${\goth s}\in\Gamma(M_1,\bbS)$.
The somewhat simplifying assumption of $g_2$ being flat can be omitted, actually, by adding an appropriate curvature term to the integrand of~\eqref{sadhm1}.
In what follows, however, we restrict ourself to the case where $(M_2,g_2)$ is flat to keep things more straightforward.

What is the geometrical meaning of the somewhat ad hoc looking functional~\eqref{sadhm1} and how is it related to the functional~\eqref{ahm} of Dirac harmonic maps?
For more than 30 years there is the conjecture that the functional of super harmonic mappings~\eqref{sadhm1} is related to the moduli space of \emph{super Riemann surfaces} similar to how the functional of harmonic maps~\eqref{ahm} is related to the moduli space of Riemann surfaces (see however~\cite{dHP-GSP}).

Recall that a super Riemann surface is a complex super manifold $\mathcal{M}$ of dimension $1|1$ together with a rank $0|1$ dimensional distribution $\mathcal{D}\subset\mathcal{TM}$, such that $\mathcal{D}\ot\mathcal{D}\simeq_\cc\mathcal{TM}/\mathcal{D}$.
Furthermore, in~\cite{EK-DR} it has been shown how the functional~\eqref{sadhm1} of super harmonic maps can be re-written on super Riemann surfaces as
\begin{equation}
\label{sahm}
	{\mathcal I}_{\mbox{\tiny SDH}}(\Phi) = \int_{\mathcal{M}}\|d\Phi|_{\mathcal{D}}\|^2[dvol]\,,
\end{equation}
where $\Phi\colon \mathcal{M}\rightarrow\mathcal{N}$ is a mapping between (families of) super manifolds which is the analogue of $\varphi\colon M_1\rightarrow M_2$ in the case of~\eqref{ahm}.
The formal similarity between~\eqref{sahm} and~\eqref{ahm} is apparent.
Within the super setting the supersymmetry transformations~\eqref{susytraf1a}--\eqref{susytraf1d} have the geometrical meaning of a specific (infinitesimal) diffeomorphism on the Riemann surface $\mathcal{M}$.
For details we refer to~\cite{JKT-SRSMG} and, especially, to~\cite{EK-DR}.
There, it is shown, for the first time, in all details that the functional~\eqref{sahm} is indeed well-defined on the moduli space of super Riemann surfaces like~\eqref{ahm} is well-defined on Teichm\"uller space.
Moreover, the geometrical meaning of the gravitino is shown to be related to the embedding of an underlying Riemann surface into $\mathcal{M}$.
Finally, the variation of~\eqref{sadhm1} with respect to the gravitino is demonstrated to have the geometrical meaning of a tangent vector field on the moduli space of super Riemann surfaces in ``odd'' directions, similar to the energy-momentum tensor is known to be tangent to the ``even'' directions.

In a way the geometrical meaning of the functional~\eqref{sadhm1} has been fully clarified in terms of super differential geometry.
Yet, one may pose the question to what extend the functional~\eqref{sadhm1} and super Riemann surfaces can also be motivated within the setting of ``classical'' (i.e.\ non-super) differential geometry.
In fact, in~\cite{JKT-SRSMG} it is discussed how super Riemann surfaces are related to spinor bundles over Riemann surfaces together with the gravitino field $\chi$.
This classical geometrical background seems more suitable for geometrical analysis than the super setting.
One reason for this might be the fact that within the super setting the functional~\eqref{sadhm1} is not longer real-valued but has to be understood as a mapping between super manifolds.
Actually, this seems to be unavoidable when one insists on supersymmetry also within the classical frame as it is exposed below.

In this work we review on how much of the classical setup can be retained to understand the functional of super harmonic maps~\eqref{sadhm1} together with the super Weyl and supersymmetry transformations~\eqref{swtraf}--\eqref{susytraf1d}.
Although much is known on this matter, already, it still seems worth presenting a detailed account on how supersymmetry enforces super geometry.
In fact, our discussion should be understood as being complementary to what has been presented in~\cite{JKT-SRSMG} and, in particular, in~\cite{EK-DR}.

\section{The geometrical setup}
In this section we summarize the geometrical background and fix the notation already used in the previous section.
The assumption that $M_1$ is two-dimensional and orientable has far reaching consequences.

Let $(M_1,g_1)$ be a two-dimensional orientable Riemannian manifold (Riemann surface).
The induced Riemannian volume form $\omega_1\equiv dvol(g_1)\in\Omega^2(M_1)$ yields an almost complex structure ${\goth I}\in{\rm End}(TM_1)$ via
\begin{equation}
\label{eq:DefAlmostComplexStructure}
	g_1(\mathfrak{I}u, v) \coloneqq \omega_1(u,v)\qquad(u,v\in TM_1)\,.
\end{equation}
Of course, this the almost complex structure would be the same for the metric rescaled by a positive function.
In other words, there is a one-to-one correspondence between almost complex structures and conformal classes of metrics on $M_1$.
This well-known fact can also been inferred from the fact that both almost complex structures and conformal classes of metrics on $M_1$ yield the same reduction of the structure group of the frame bundle by $\rr_+\times SO(2)\subset GL(2,\rr)$.

It is particular to the two dimensional case that any such almost complex structure is integrable.
Consequently, $M_1$ may be regarded as a one-dimensional complex manifold.
It follows that the complexified tangent bundle splits into the holomorphic and anti-holomorphic vector fields on $M_1$.
That is,
\begin{equation}
	TM_1^{\mbox{\tiny$\cc$}}\equiv TM_1\ot\cc \simeq T^{\mbox{\tiny(1,0)}}M_1\op T^{\mbox{\tiny(0,1)}} M_1\,.
\end{equation}
Clearly, the realification of $T^{\mbox{\tiny(1,0)}} M_1$ is but $TM_1$.

As \(\Spin(2)\simeq \SO(2)\), the real rank two \emph{Majorana spinor bundle} $\pi_{\mbox{\tiny$\bbS$}}\colon \bbS\rightarrow M_1$ associated to a chosen spin structure on \(M_1\) is also equipped with a symmetric scalar product and an antisymmetric bilinear form.
Those are given by the lifts of the metric \(g_1\) and the volume form \(\omega_1\) on \(TM_1\) and will be denoted by the same symbol on \(\bbS\).
Consequently, similar to Equation~\eqref{eq:DefAlmostComplexStructure}, there is an almost complex structure \(\ACI\) on \(\bbS\), that depends only on the conformal class of \(g_1\).

The complexification \(S=\bbS\otimes\cc\), is called the complex \emph{bundle of Dirac spinors} and splits into the eigenspaces of \(\ACI\) of eigenvalue \(\pm\ic\), such that
\begin{equation}
\label{paritydec}
	S\equiv\bbS\ot\cc = W\op\overline{W}.
\end{equation}
The mutually complex conjugate sub-vector bundles $W$ (eigenvalue \(+\ic\)) and $\overline{W}$ (eigenvalue \(-\ic\)) are called \emph{Weyl spinor bundles}.
The complex line bundle \(W\) is isomorphic as a complex line bundle to \(\bbS\), where the complex structure on the latter is given by the almost complex structure \(\ACI\).
Furthermore, as the chosen spin structure consists of a fiber wise double cover \(\Spin(2)\to\SO(2)\), one obtains the following identities of complex line bundles:
\begin{align}
\label{spinfac}
	W\ot W &\simeq T^{\mbox{\tiny(1,0)}} M_1\,,\\
\label{antispinfac}
	\overline{W}\ot\overline{W} &\simeq T^{\mbox{\tiny(0,1)}} M_1\,.
\end{align}
In fact, the choice of a line bundle \(W\) with the property~\eqref{spinfac} is equivalent to the choice of a spin structure.
A corollary of Equation~\eqref{spinfac} is that \(W\) is a holomorphic line bundle on the complex manifold \(M_1\).

Clearly, the factorization of the holomorphic vector fields on a Riemann surface into spinors~\eqref{spinfac} corresponds to the basic property of a super Riemann surface.
That is, the choice of a $0|1-$distribution $\mathcal{D}\subset\mathcal{M}$ in the ``super world'' reduces on the ``classical'' side to the choice of a spin structure on $M_1$.
Indeed, one can show, that trivial families of super Riemann surfaces are in one-to-one correspondence to Riemann surfaces with chosen spin structure, see e.g.~\cite[Proposition 4.2.2.]{S-GAASTS}.
To also capture non-trivial families of super Riemann surfaces, one needs the gravitino field \(\chi\) that appeared already in the first section.
We will now turn to the study of differential forms with spinor values, the bundle of which \(\chi\) is a section.

The twisted Dirac spinor bundle $\pi_{\mbox{\tiny$S\!\ot\!T^\ast\!M_1$}}\colon S\!\ot\!T^\ast\!M_1^{\mbox{\tiny$\cc$}}\rightarrow M_1$ decomposes as
\begin{equation}
\label{eq:DirectSumDecompositionOfSpinorValuedDifferentialForms}
	\begin{split}
		S\ot T^\ast\!M_1^{\mbox{\tiny$\cc$}} &\simeq S\oplus\big(W^3\op\overline{W}^3\big)\\
			&= \big(\bbS\op\GG\big)\ot\cc\,.
	\end{split}
\end{equation}
Here, the real sub-bundle $\pi_{\mbox{\tiny$\bbS\!\ot\!\GG$}}\colon \bbS\ot\GG\rightarrow M_1$ refers to the canonical real structure on $S\ot T^\ast\!M_1^{\mbox{\tiny$\cc$}}$.
Notice that the rank two complex vector bundle with total space $W^3\op\overline{W}^3$ has a canonical real structure $\GG$ in contrast to the complex line bundle with total space $W$ (or $\overline{W}$).
The latter has a real structure if and only if it is trivial.
Notice that \(\GG\) is the realification of the complex line bundle \(W^3\equiv W\otimes W\otimes W\).

As an upshot also the real twisted spinor bundle $\pi_{\mbox{\tiny$\bbS\!\ot\!T^\ast M_1$}}\colon \bbS\ot T^\ast\!M_1\rightarrow M_1$ becomes $\zz_2-$graded
\begin{equation}
\label{rtwspindec}
	\bbS\ot T^\ast\!M_1 \simeq \bbS\op\GG\,.
\end{equation}
Explicitly, the corresponding projection operators read
\begin{equation}
	\begin{split}
		{\goth p}\colon \bbS\ot T^\ast\!M_1 &\longrightarrow \bbS\ot T^\ast\!M_1\\
		\sigma_k\ot e^k &\mapsto \frac{1}{2}g_1(e_i,e_j)\gamma(e^j)\gamma(e^k)\sigma_k\ot e^i\,,\\[.2cm]
		{\goth q}\colon \bbS\ot T^\ast\!M_1 &\longrightarrow \bbS\ot T^\ast\!M_1\\
		\sigma_k\ot e^k &\mapsto \frac{1}{2}g_1(e_i,e_j)\gamma(e^k)\gamma(e^j)\sigma_k\ot e^i\,.
	\end{split}
\end{equation}
Here, and in the sequel, we take advantage of Einstein's summation convention.
Also, $e_1,e_2$ is a local (oriented) frame on \(TM_1\) with dual frame denoted by $e^1,e^2$.
Finally,
\begin{equation}
\label{cliffmap}
	\begin{split}
		\gamma\colon T^\ast\!M_1 &\longrightarrow {\rm End}(\bbS)\\
		\alpha &\mapsto \gamma(\alpha)
	\end{split}
\end{equation}
denotes a \emph{Clifford map}.
We also make use of the common notation: $\gamma^k\equiv\gamma(e^k)$, whenever $e^1,e^2$ is an oriented orthonormal basis with respect to $g_1^\ast$.

In more abstract terms the complementary projection operators ${\goth p}$ and ${\goth q} = 1 - {\goth p}$ are given by the following two mappings:
\begin{equation}
\label{fundmaps}
	\begin{split}
		\delta_\gamma\colon \bbS\ot T^\ast\!M_1 &\longrightarrow \bbS\\
		\sigma_k\ot e^k &\mapsto \gamma^k\sigma_k\,,\\[.2cm]
		\delta_{\mbox{\tiny$\Theta$}}:\,\bbS &\longrightarrow \bbS\ot T^\ast\!M_1\\
		\sigma &\mapsto \frac{1}{2}\delta_{ij}\gamma^i\sigma\ot e^j\,.
	\end{split}
\end{equation}
Since $\delta_\gamma\circ\delta_{\mbox{\tiny$\Theta$}} = 1$, one may define ${\goth p} \coloneqq \delta_{\mbox{\tiny$\Theta$}}\circ\delta_\gamma$.
We call $\delta_\gamma$ the \emph{quantization map} and simply write $\delta_\gamma(\alpha)\equiv\gamma(\alpha)\in{\rm End}(\bbS)$ for all $\alpha\in T^\ast\!M_1$.

Notice hat the projection operators ${\goth p}$ and ${\goth q}$ are self-adjoint, such that the decomposition
\begin{equation}
\label{orthodec}
	\bbS\ot T^\ast\!M_1 = {\goth p}(\bbS\ot T^\ast\!M_1)\op{\goth q}(\bbS\ot T^\ast\!M_1)
\end{equation}
becomes orthogonal.

Let $\phantom{x}^{\sharp/\flat}\colon T^\ast\!M_1\simeq TM_1$ be the ``musical'' isomorphisms defined by $g_1$
and its dual $g_1^\ast$, such that, for instance, $g^\ast_1(\alpha,\beta) = g_1(\alpha^\sharp,\beta^\sharp)$ for all
$\alpha,\beta\in T^\ast\!M_1$. We define for all ${\goth s}\in\bbS$ the canonical inclusion $\bbS\hookrightarrow\bbS\ot TM_1$
by $\delta_{\mbox{\tiny$\Theta^\sharp$}}{\goth s} \coloneqq \frac{1}{2}\gamma^k{\goth s}\ot e_k\in\bbS\ot TM_1$. Notice that
every Clifford map~\eqref{cliffmap} induces a canonical one-form $\Theta\in\Omega^1(M_1,{\rm End}(\bbS))$ that is given
by $\Theta(v) \coloneqq \frac{1}{2}\gamma(v^\flat)$ for all $v\in TM_1$. Explicitly,
$\Theta = \frac{1}{2}\delta_{ij}\gamma^i\ot e^j$. Accordingly, we put $\Theta^\sharp \coloneqq \frac{1}{2}\gamma^k\ot e_k$.

Any \(g_1\)-orthonormal frame \(e_1, e_2\) for \(TM_1\) gives rise to the hermitian frames \(e=\left(e_1-\ic e_2\right)/\sqrt{2}\) of \(T^{(1,0)}M_1\) and \(\overline{e}=\left(e_1+\ic e_2\right)/\sqrt{2}\) for \(T^{(0,1)}M_1\).
Similarly on \(\bbS\), a \(g_1\)-orthonormal frame \(\mathfrak{s}_1, \mathfrak{s}_2\) gives rise to a hermitian frame \(\mathfrak{w}=\left(\mathfrak{s}_1-\ic\mathfrak{s}_2\right)/\sqrt{2}\) on \(W\) and \(\overline{\mathfrak{w}}=\left(\mathfrak{s}_1 + \ic\mathfrak{s}_2\right)/\sqrt{2}\) on \(\overline{W}\).
We suppose furthermore that the frame for \(S\) covers the frame for \(TM_1\), i.e.\ \(\mathfrak{w}\otimes\mathfrak{w}=e\).
For the dual spaces \({\left(T^{(1,0)}M_1\right)}^*\) and \({\left(T^{(0,1)}M_1\right)}^*\) we use the dual basis of \(e, \overline{e}\), denoted by \(\theta\) and \(\overline{\theta}\) respectively.
Then, by construction of Equation~\eqref{eq:DirectSumDecompositionOfSpinorValuedDifferentialForms} it holds that
\begin{equation}
\label{eq:PQProjectors}
	\begin{split}
		{\goth p}(\bbS\ot T^\ast\!M_1) &= \left\{z\,{\goth w}\ot\theta + {\bar z}\,{\bar{\goth w}}\ot{\bar\theta}\,|\,z\in\cc\right\} \simeq\bbS\,,\\
		{\goth q}(\bbS\ot T^\ast\!M_1) &= \left\{z\,{\goth w}\ot{\bar\theta} + {\bar z}\,{\bar{\goth w}}\ot\theta\,|\,z\in\cc\right\}\simeq\GG\,.
	\end{split}
\end{equation}

The orthogonal decomposition~\eqref{rtwspindec} is but the irreducible decomposition of the twisted Majorana bundle $\bbS\ot T^\ast\!M_1$ into its spin-1/2 and a spin-3/2 parts.
That is, every ${\goth z}\in\bbS\ot T^\ast\!M_1$ has a unique decomposition
\begin{equation}
	{\goth z} = \delta_{\mbox{\tiny$\Theta$}}{\goth s} + {\goth g}\,,
\end{equation}
where ${\goth s}\in\bbS$ is uniquely determined by ${\goth s} \coloneqq \delta_\gamma({\goth z})$.
Likewise, the spin-3/2 spinor ${\goth g}\in\GG$ is uniquely determined by $\delta_\gamma({\goth g}) \coloneqq 0$.

It is amazing that the classical realm discussed so far can be basically subsumed by the fact that the Clifford algebra $Cl_{2,0}$ of the Euclidean plane $\rr^{2,0}$ decomposes as
\begin{equation}
\label{cliffdec}
	Cl_{2,0} \simeq \cc\op\rr^{2,0}
\end{equation}
and by the equality $Spin(2) = SO(2)$.
The latter identification allows to regard both the metric $g_1$ and the symplectic form $\omega_1\equiv dvol(g_1)$ as being inner products on $\bbS$.
That is, the notation $\big<\cdot,\cdot\big>_{\mbox{\tiny$\bbS$}}$ for the metric on the spinor bundle is but $g_1$.
The different notation used is just to indicate on whether $g_1$ acts as an inner product on spinors or on tangent vectors.

As mentioned already, in the usual approach to the action of Dirac harmonic maps~\eqref{adhm} one considers the Clifford algebra $Cl_{0,2}$, instead of $Cl_{2,0}$.
This is to avoid the flaw of a vanishing Dirac action.
Here, one takes into account that $Cl_{0,2}\simeq_\rr\hhh$ and identifies the latter, as a vector space, with $\cc^2$.
Notice that the only spinor module of $Cl_{0,2}$ is given by the Clifford algebra $Cl_{0,2}$ itself.
Hence, in the usual approach to Dirac harmonic maps on Riemann surfaces one identifies spinors with sections of a complex vector bundle of rank two.
Clearly, this spinor module carries a skew-hermitian representation of the Clifford action.
However, in this approach one loses much of the structure contained in the decomposition~\eqref{cliffdec}.
In particular, one loses the factorization~\eqref{spinfac}, which is at the very heart of the definition of super Riemann surfaces and the notion of gravitinos.
The meaning of the latter within the classical realm will be discussed next.
 
\section{Torsion on Riemann surfaces}
With a connection $\nabla$ on the tangent bundle of an arbitrary smooth manifold $M$ there are associated two different geometrical objects: the curvature and the \emph{torsion} of this connection.
The torsion may be defined as
\begin{equation}
	\tau_{\mbox{\tiny$\nabla$}} \coloneqq d_{\mbox{\tiny$\nabla$}}{\goth{Id}}\,,
\end{equation}
where, respectively, $d_{\mbox{\tiny$\nabla$}} $ and ${\goth{Id}}\in\Omega^1(M,TM)$ are the exterior covariant derivative with respect to the connection $\nabla$ and the canonical one-form that corresponds to the soldering form on the frame bundle of $M$.
On an $n-$dimensional orientable Riemannian manifold $(M,g)$ one may describe the torsion of a metric connection equivalently in terms of a one-form $A\in\Omega^1(M,so(n))$ via
\begin{equation}
	\tau_{\mbox{\tiny$\nabla$}}(u,v) \eqqcolon A(u)v - A(v)u\qquad(u,v\in TM)\,.
\end{equation}

In particular, in the case of a Riemann surface the torsion of the most general metric connection reads
\begin{equation}
	\tau_{\mbox{\tiny$\nabla$}} = A\ot{\goth I}\,,
\end{equation}
with $A\in\Omega^1(M_1)$ being an ordinary one-form.
Accordingly, the torsion can be lifted to the (real) spinor bundle $\bbS$ as $\frac{1}{2}A\ot\gamma^1\gamma^2\in\Omega^1(M_1,{\rm End}(\bbS))$.

Notice that the most general metric connection on $M_1$ reads
\begin{equation}
\label{vantor}
	\begin{split}
		\nabla^{\mbox{\tiny$g$}} &= \nabla^{\mbox{\tiny LC}} + A\ot{\goth I}\\
		&\stackrel{loc.}{=} d + (\Gamma + A)\ot{\goth I}\,,
	\end{split}
\end{equation}
where $\Gamma(e_k) \coloneqq g_{\mbox{\tiny M}}(\nabla^{\mbox{\tiny LC}}_{\!\!e_k}e_1,e_2)\;(k=1,2)$ is the connection form of the Levi-Civita connection with respect to an arbitrary (oriented) $g_1-$orthonormal basis.
We denote the induced connection on the (real) spinor bundle by $\da \coloneqq \nabla^{\mbox{\tiny S}} + \frac{1}{2}A\ot\gamma^1\gamma^2$, with $\nabla^{\mbox{\tiny S}} \stackrel{loc.}{=} d + \frac{1}{2}\Gamma\ot\gamma^1\gamma^2$ being the ordinary spin connection on $\bbS$.

In~\cite{T-EHACCFGF} it has been demonstrated how the functional
\begin{equation}
\label{dym}
	\int_{M_1}\!\!\Big(\big<\psi,\dda\psi\big>_{\mbox{\tiny$\bbS\!\ot\!T^\ast\!M_1$}} + \|F_{\!\mbox{\tiny A}}\|^2\Big)dvol(g_1)
\end{equation}
can be derived from a specific class of Dirac operators. Here, respectively, $\dda$ is the quantized connection $\da$ and $F_{\!\mbox{\tiny A}} \coloneqq dA$ is the curvature form associated to the torsion $\tau_{\mbox{\tiny$\nabla$}}$.

Even more, for smooth mappings $\varphi\colon M_1\rightarrow M_2$  between arbitrary Riemann manifolds it has been also shown in~\cite{T-EHACCFGF} how the functional of \emph{Dirac harmonic maps with torsion}
\begin{equation}
\label{dhym}
	\int_{M_1}\!\!\Big(scal(g_1) + \|d\varphi\|^2 + \big<\psi,\dda\psi\big>_{\mbox{\tiny$\bbS\!\ot\!T^\ast\!M_1$}} + \|F_{\!\mbox{\tiny A}}\|^2\Big)dvol(g_1)\,.
\end{equation}
naturally fits with the geometry of \emph{Dirac operators of simple type}.

However, in the two-dimensional case the part that involves the scalar curvature on $M_1$ becomes a topological invariant (Euler characteristic).
Up to this constant the functional~\eqref{dhym} is known to be Weyl invariant if and only if $F_{\!\mbox{\tiny A}} = 0$. In this case, one may always find a gauge (i.e.\ a frame), such that torsion vanishes, locally (c.f.~\eqref{vantor}).
Hence, by imposing conformal invariance~\eqref{dhym} reduces to the functional~\eqref{adhm} of Dirac harmonic maps.
This demonstrates how the latter naturally fits with the geometry of Dirac operators of simple type.
Let us point out, however, that by the geometrical construction indicated the (local) vanishing of torsion on Riemann surfaces is implemented by a co-homological condition instead put in by hand right-a-way.
This co-homological condition may be viewed as the analogue of the independency of the functional of super harmonic maps~\eqref{sadhm1} from the torsion, although torsion is around due to~\eqref{susytraf1d}.

How does the gravitino enter the classical stage?
The answer is provided by the factorization condition~\eqref{torfac} imposed on the torsion by the demand of supersymmetry?

Indeed, one has the following
\begin{proposition}
	On a Riemann surface every $A\in\Omega^1(M_1)$ factorizes by a section $\chi\in\Gamma(M_1,\bbS\!\ot\!T^\ast\!M_1)$,
	i.e.
	\begin{equation}\label{eq:GravitinoFactorizesTorsion}
		A = \big<\gamma(\chi),\chi\big>_{\mbox{\tiny$\bbS$}}\,.
	\end{equation}
\end{proposition}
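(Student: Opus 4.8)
The plan is to read off the factorization map explicitly, prove that it is surjective on each fibre by a short algebraic computation, and only then upgrade the fibrewise result to a global smooth section. First I would unravel the notation. Since $\gamma(\chi)$ denotes the quantization $\delta_\gamma(\chi)=\gamma^k\chi_k\in\bbS$ of the spinor-valued one-form $\chi=\chi_k\ot e^k$, the right-hand side $\big<\gamma(\chi),\chi\big>_{\mbox{\tiny$\bbS$}}$ pairs the two spinor slots and keeps one form index, so it is genuinely a one-form, with components $A_i=\big<\gamma^k\chi_k,\chi_i\big>_{\mbox{\tiny$\bbS$}}$. Thus the claim is that the quadratic bundle map $Q\colon\bbS\ot T^\ast\!M_1\to T^\ast\!M_1$, $\chi\mapsto\big<\gamma(\chi),\chi\big>_{\mbox{\tiny$\bbS$}}$, hits every $A\in\Omega^1(M_1)$.

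For fibrewise surjectivity at a point $t$ with $A(t)\neq0$, I would use the one-term ansatz $\chi=\sigma\ot\beta$ with $\beta\coloneqq A/\sqrt{g_1^\ast(A,A)}$ a unit covector, giving $Q(\chi)=\big<\gamma(\beta)\sigma,\sigma\big>_{\mbox{\tiny$\bbS$}}\,\beta$. Because we work with $Cl_{2,0}$, the endomorphism $\gamma(\beta)$ is \emph{symmetric} with $\gamma(\beta)^2=g_1^\ast(\beta,\beta)\,\id=\id$, hence it has a unit $+1$-eigenvector $\sigma_+$; scaling $\sigma=(g_1^\ast(A,A))^{1/4}\sigma_+$ then yields $Q(\chi)=A$. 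It is precisely the symmetry of the representation — the absence of the skew representation that forces the real Dirac action~\eqref{da} to vanish — that supplies this real eigenvector. Equivalently, restricting $Q$ to the spin-$1/2$ part $\chi=\delta_{\mbox{\tiny$\Theta$}}\sigma$ gives, under $\bbS\simeq W$ and $W\ot W\simeq T^{\mbox{\tiny(1,0)}}M_1$, the map $\sigma\mapsto\sigma^2$ up to a constant, whose surjectivity is just the existence of complex square roots. Either way every fibre of $Q$ is covered.

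The passage to a global smooth $\chi$ is where the genuine difficulty lies, and I expect it to be the main obstacle. The naive pointwise solution is not smooth across the zero set of $A$: the eigenvector $\sigma_+$ of $\gamma(A/\|A\|)$ winds at half the rate of $A$, which is exactly the line-bundle statement that a section of $W^{\ot(-2)}$ need not be the square of a section of $W^{-1}$. The remedy is to use the full bundle and not only its spin-$1/2$ summand. Expanding $Q$ in the complex frame and counting $W$-weights, the $(1,0)$-part of $A$ takes the shape $\lambda\,u^{2}+\mu\,\overline{u}\,\overline{v}$, where $u$ is the ${\goth p}$-component living in $W^{-1}$ and $v$ is the genuine gravitino (i.e.\ the ${\goth q}$-, spin-$3/2$) component living in $W^{\ot 3}$, matching the realification $\GG$ of $W^3$ in~\eqref{eq:DirectSumDecompositionOfSpinorValuedDifferentialForms}. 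The extra term supplied by $\GG$ is what can absorb the half-integer winding that obstructs the pure spin-$1/2$ ansatz.

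Concretely, I would first solve over the complement of the (isolated, finitely many, by compactness) zeros of $A$, where $Q$ is a submersion onto $T^\ast\!M_1$, so that a smooth local solution exists and extends over the open surface by obstruction theory, the relevant obstructions lying in $H^1$ with coefficients in the (vanishing) homotopy of the fibre. One then uses the $\overline{u}\,\overline{v}$-term, with $v$ free, to correct the ansatz near each zero so that $\chi\to0$ smoothly there, the spin-$3/2$ freedom providing exactly the room needed to divide out the discrepancy. Arranging this smoothness at the zeros of $A$ — rather than merely matching $A$ to leading order — is the delicate technical heart of the argument, and is the step I would expect to require the most care.
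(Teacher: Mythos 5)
Your fibrewise analysis is correct and in fact isolates exactly the mechanism on which the paper's proof runs: writing $\chi = u\,{\goth w}\ot\theta + \bar u\,\bar{\goth w}\ot\bar\theta + v\,{\goth w}\ot\bar\theta + \bar v\,\bar{\goth w}\ot\theta$ in the frames of~\eqref{eq:PQProjectors}, one computes $\big<\gamma(\chi),\chi\big>_{\mbox{\tiny$\bbS$}} = \sqrt{2}\,\big((u^2+\bar u\bar v)\,\theta + \overline{(u^2+\bar u\bar v)}\,\bar\theta\big)$, which is precisely your ``$\lambda u^2+\mu\bar u\bar v$'' shape. The paper, however, does not argue via surjectivity of the quadratic bundle map plus globalization. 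It writes down a closed-form solution: a spin-$1/2$ part ${\goth s}$ with coefficients ${\rm Re}\sqrt{a}$, $-{\rm Im}\sqrt{a}$ and a spin-$3/2$ part ${\goth g}$ with coefficients built from $\bar a/\sqrt{a} = \bar a^{3/2}/\|A\|$, and sets $\chi = \tfrac{1}{\sqrt 2}\big(\delta_{\mbox{\tiny$\Theta$}}{\goth s}+{\goth g}\big)$, so that the two terms $u^2$ and $\bar u\bar v$ each contribute $a/2$. The two-valuedness of $\sqrt{a}$ is absorbed because the coefficients transform in the spin-$1/2$ and spin-$3/2$ representations of $Spin(2)=SO(2)$ --- this is where~\eqref{spinfac}--\eqref{antispinfac} enter, and it is the same half-winding phenomenon you observe for the eigenvector of $\gamma(A/\|A\|)$. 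So your eigenvector argument and the paper's explicit formula buy the same thing: a solution wherever $A\neq 0$, well-defined on the spinor bundle despite the square root.

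Where your proposal goes beyond the paper, it also goes wrong in places. Note first that the paper's own formulas are singular on the zero set of $A$ (they contain $1/\|A\|$, and $\bar a/\sqrt{a}$ is merely continuous, of magnitude $|a|^{1/2}$, at a zero), and the paper offers no argument for smoothness there; the issue you call the ``delicate technical heart'' is simply not treated in the target proof, so you are attempting strictly more than the paper establishes. Within your sketch of that extra step there are concrete errors: (i) the zeros of a smooth one-form on a closed surface need not be isolated or finite --- zero sets of smooth sections can be arbitrary closed subsets, compactness notwithstanding (only a nowhere-vanishing $A$ is excluded when the Euler characteristic is nonzero); (ii) the fibre of your map $Q$ over a nonzero covector is $\{(u,v)\,:\,u\neq 0,\ \bar v = (a/\sqrt{2}-u^2)/\bar u\}\simeq\cc^\ast$, which is homotopy equivalent to a circle, so its homotopy does \emph{not} vanish; a section over the complement of the zero set exists anyway, because $H^2$ of an open surface vanishes, but your stated reason is incorrect; (iii) the decisive smooth extension across the zeros of $A$ --- the only genuinely new content relative to the paper --- is only gestured at, never carried out. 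As it stands, your argument proves the proposition to the same standard as the paper (an explicit factorization where $A\neq 0$, equivariant under the spin double cover), by a different and equally valid route, while the global smooth statement at the zero set remains open in both.
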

\begin{proof}
	The proof basically takes advantage of the factorizations~\eqref{spinfac}--\eqref{antispinfac}.

	Explicitly, let \(A\) be given by $A = a_1e^1 + a_2e^2 = a\theta + {\bar a}{\bar\theta}\in\Omega^1(M_1)$ in terms of ${\goth s}\in{\goth S}ec(M_1,\bbS)$.
	We may define the spinor \(\mathfrak{s}\) and the gravitino \(\mathfrak{g}\) in terms of $A$ as follows
	\begin{equation}
		\begin{split}
			{\goth s} &\coloneqq {\rm Re}\sqrt{a}\,{\goth s}_1 - {\rm Im}\sqrt{a}\,{\goth s}_2 = a_1{\goth s}_1 + a_2{\goth s}_2\in\bbS\,,\\
			{\goth g} &\coloneqq {\rm Re}\frac{{\bar a}}{\sqrt{a}}\big({\goth s}_1\ot e^1 - {\goth s}_2\ot e^2\big) +
			{\rm Im}\frac{{\bar a}}{\sqrt{a}}\big({\goth s}_2\ot e^1 + {\goth s}_1\ot e^2\big)\\
			&=
			\frac{1}{\|A\|}\left({\rm Re}\,{\bar a}^{3/2}\big({\goth s}_1\ot e^1 - {\goth s}_2\ot e^2\big) +
			{\rm Im}\,{\bar a}^{3/2}\big({\goth s}_2\ot e^1 + {\goth s}_1\ot e^2\big)\right)\in\GG\,.
		\end{split}
	\end{equation}
	Apparently, the coefficients of the gravitino ${\goth g}$ transform with respect to a $3/2-$repre\-sentation of $Spin(2) = SO(2)$, whereas the coefficients of ${\goth s}$ with respect to the fundamental representation of the spin group.
	One calculates that
	\begin{equation}
		\langle{\goth s},{\goth g}\rangle_{\mbox{\tiny$\bbS$}} = A\,.
	\end{equation}

	Alternatively, one may write
	\begin{equation}
		\chi \coloneqq \frac{1}{\sqrt{2}}\big(\delta_{\mbox{\tiny$\Theta$}}{\goth s} + {\goth g}\big)\in\Omega^1(M_1,\bbS)\,,
	\end{equation}
	such that
	\begin{equation}
		A = \langle\delta_\gamma(\chi),\chi\rangle_{\mbox{\tiny$\bbS$}}\,.
	\end{equation}
\end{proof}

Clearly, given \(\chi\in \Omega^1(M_1, \bbS)\) the formula~\eqref{eq:GravitinoFactorizesTorsion} defines a torsion.
Yet, the correspondency between the torsion and the gravitino (i.e. $A\leftrightarrow\chi$) is far from being unique.
Of course, the assumption $F_{\!\mbox{\tiny A}} = 0$ (enforced by ordinary conformal symmetry) remedies this factorization ambiguity, for it guarantees that the functional~\eqref{dhym} does not depend at all on torsion.
Indeed there are many ways to factorize torsion in terms of the section \(\chi\).
One may thus pose the question, what is the invention of torsion good for looking at the functional of Dirac harmonic maps from the classical point of view? In the language of physics the answer can be expressed as follows.
The factorization of torsion by gravitino fields allows to introduce additional ``couplings'' (i.e.\ invariants) in terms of the gravitino which would not be appear otherwise.
Of course, these additional invariants should be compatible with all the symmetries imposed.
Especially, the couplings should obey conformal symmetry.
When further restricted to at most quadratic invariants one ends up with the following three conformal invariants:
\begin{align}
\label{susyint1}
	2\big<\psi,ev\big(\chi\ot{\goth q}(\chi)\big)\psi\big>_{\!\mbox{\tiny$\mathcal{E}$}} &= \big<\chi_i,\gamma^j\gamma^i\chi_j\big>_{\!\mbox{\tiny$\bbS$}} \big<\psi,\psi\big>_{\!\mbox{\tiny$\mathcal{E}$}}\,,\\
\label{susyint2}
	2\big<\psi,\big(\chi\ot{\goth q}(\chi)\big)\psi\big>_ {\!\mbox{\tiny$\mathcal{E}$}} &= \big<\chi_i,\gamma^j\gamma^i\psi^k\big>_{\!\mbox{\tiny$\bbS$}} \big<\psi_k,\chi_j\big>_{\!\mbox{\tiny$\bbS$}}\,,\\
\label{susyint3}
	2\big<{\goth q}(\chi)(\grad\varphi),\psi\big>_{\!\mbox{\tiny$\mathcal{E}$}} &= \delta^{ij}\big<\chi_i\ot\partial_j\varphi,\psi\big>_{\!\mbox{\tiny$\mathcal{E}$}}\,.
\end{align}

Super Weyl invariance eventually allows to reduce these couplings further to exactly the two coupling terms that appear in the functional of super harmonic maps~\eqref{sadhm1}.
Indeed, the conformal invariant~\eqref{susyint2} is ruled out by imposing invariance also under~\eqref{swtraf}.
We stress that the invariance of~\eqref{susyint1} under super Weyl transformations is guaranteed by the orthogonal decomposition~\eqref{orthodec}.

Having clarified how the functional of super harmonic mappings may be motivated within the geometrical setup of classical differential geometry, we now turn to the question how to motivate the supersymmetry transformations~\eqref{susytraf1a}--\eqref{susytraf1d} from this point of view.
This is mainly due to the triviality of the Dirac action for real spinors, as will be discussed next.
 
\section{The Dirac action on Riemannian surfaces and supersymmetry}
In the case of a (closed compact) two-dimensional Riemannian manifold the Dirac action~\eqref{da} vanishes in the case of Majorana (i.e.\ real) spinors.
The reason for this is that the Clifford algebra $Cl_{2,0}$ has no skew-symmetric Majorana representations.
In fact, the real spinor module only allows for symmetric representations of $Cl_{2,0}$.
Hence, the standard spin Dirac operator $\ddd$ is skew-symmetric when acting on Majorana fields.
Therefore, when switching to Dirac (i.e.\ complex) spinors, one may replace $\ddd$ by $i\ddd$ to obtain a hermitian Dirac operator.
However, in this case the functional of super harmonic maps is not guaranteed to be real, which makes an analytical treatment of this functional more complicated.

The usual ``way out'' of this dilemma is to consider spinors as being ``odd'' (or ``anti-commuting'') objects.
More precisely, one assumes that there exist “superized” extensions \(\hat{\bbS}\) of \(\bbS\), \(\hat{g}_1\) of the spinor metric \(g_1\), and \(\hat{\omega}_1\) of \(\omega_1\) such that for all spinors $\hat{\goth s},\hat{\goth s}'\in\hat{\bbS}$
\begin{equation}
\label{oddspinor}
	\begin{split}
		\hat{g}_1(\hat{\goth s}, \hat{\goth s}') = -\hat{g}_1(\hat{\goth s}', \hat{\goth s})\,,\\
		\hat{\omega}_1(\hat{\goth s}, \hat{\goth s}') = +\hat{\omega}_1(\hat{\goth s}', \hat{\goth s})\,.
	\end{split}
\end{equation}
Note that the signs in~\eqref{oddspinor} follow the rule that whenever two odd spinors are permuted an extra sign is acquired.

The main motivation for this sign rule is that it remedies the main cause for the vanishing of the Dirac action, namely the non-existence of skew-symmetric representations for \(Cl_{2,0}\).
It holds that every $g_1-$symmetric Majorana representation of $Cl_{2,0}$ is $\hat{\omega}_1-$skew-symmetric and vice versa:
\begin{equation}
\label{symrep}
	g_1({\goth s}, \gamma(\alpha){\goth s}') =
	g_1(\gamma(\alpha){\goth s}, {\goth s}')  \quad\Leftrightarrow\quad
		\hat{\omega}_1(\hat{\goth s},\gamma(\alpha)\hat{\goth s}') = -\hat{\omega}_1(\gamma(\alpha)\hat{\goth s},\hat{\goth s}')\,,
\end{equation}
for all $\alpha\in TM_1$ and ${\goth s},{\goth s}'\in\bbS$ (either ``ordinary'', or ``odd'' spinors).
Indeed it is possible to obtain non-trivial Dirac actions using \(\hat{\omega}_1\), see Lemma~\ref{indep} below.

In super algebra, the vector bundle \(\hat{\bbS}\) is constructed by extending the scalars to some anti-commuting ring.
But, how can~\eqref{oddspinor} be understood within the classical setting, thereby avoiding the notion of super algebra and super manifolds?
A kind of ``cheap'' way in doing so is to use to assume that also the target manifold $M_2$ has a symplectic structure $\omega_2$ and to use a twisted spinor bundle.
More precisely, we replace $\varphi^\ast g_2$ by $\varphi^\ast\omega_2$ to obtain on the twisted spinor bundle
\begin{equation}
\label{twistspin2}
	\pi_{\mbox{\tiny E}}\colon E \coloneqq \bbS^\ast\ot\varphi^\ast TM_2 \longrightarrow M_1\\
\end{equation}
the symmetric inner product
\begin{equation}
		{\big(\phi,\psi\big)}_{\!\mbox{\tiny E}} = \epsilon^{kl}\, \omega_2\big(\phi_k,\psi_l\big) = {\big(\psi,\phi\big)}_{\!\mbox{\tiny E}}\,.
\end{equation}
Here, the symplectic form $\omega^\ast_1$ on the dual spinor bundle is defined in terms of the dual of the Riemannian volume form on $M_1$.
Furthermore, we set $\epsilon^{kl}\equiv\omega_1^*(e^k,e^l)$ for any symplectic orthonormal basis $\mathfrak{s}^1,\mathfrak{s}^2\in\bbS^*$.

As a matter of notation we use the shorthand
\begin{equation}
	\omega_2\big(\phi_k,\psi_l\big) \equiv \phi_k\cdot\psi_l\,,
\end{equation}
such that formally the coefficients of the spinor fields $\psi=\mathfrak{s}^k\otimes\psi_k$ and $\phi=\mathfrak{s}^l\otimes\phi_l$ anti-commute, i.e.
\begin{equation}
\label{oddness1}
	\phi_k\cdot\psi_l = -\psi_k\cdot\phi_l\,.
\end{equation}

One then proves the following
\begin{lemma}\label{indep}
	For every Clifford connection on the twisted spinor bundle \(E\) (see Equation~\eqref{twistspin2}) the Dirac action~\eqref{da} is a real-valued non-trivial functional.
	Furthermore, the Dirac action~\eqref{da} does not depend on the metric connection used on the spinor bundle.
\end{lemma}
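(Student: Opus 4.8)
The plan is to treat the three assertions---reality, non-triviality, and independence of the metric connection---separately, reducing all of them to the interplay between the symmetry type of the pairing $(\cdot,\cdot)_{\mbox{\tiny E}}$ and the adjointness of the Clifford action on the twisted bundle $E$ of \eqref{twistspin2}. First I would record that $E=\bbS^\ast\ot\varphi^\ast TM_2$ is a \emph{real} vector bundle and that $(\cdot,\cdot)_{\mbox{\tiny E}}=\epsilon^{kl}\omega_2(\cdot_k,\cdot_l)$ is assembled from the real forms $\omega_1^\ast$ and $\omega_2$; since any Clifford connection on a real bundle is a real operator, the integrand $(\psi,\DDD\psi)_{\mbox{\tiny E}}$ of \eqref{da} is a real function on $M_1$ and its integral is real. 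This already contrasts with the complex-spinor route, where one is forced to pass to $i\ddd$ and thereby loses reality.

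For non-triviality the key observation is that the passage from the symmetric target metric $g_2$ to the symplectic $\omega_2$---equivalently, the formal anticommutation \eqref{oddness1} of the components, i.e.\ the antisymmetry of $\omega_2$---\emph{reverses} the adjointness of the Clifford action relative to a symmetric pairing. On the untwisted Majorana bundle the representation is $g_1$-self-adjoint, so after integration by parts $\ddd$ is $g_1$-skew-adjoint and $\langle\!\langle\psi,\ddd\psi\rangle\!\rangle=-\langle\!\langle\psi,\ddd\psi\rangle\!\rangle=0$; this is exactly the vanishing one must defeat. On $E$, however, \eqref{symrep} shows the same representation is \emph{skew} with respect to the symplectic spinor pairing $\omega_1^\ast$, and since $\gamma^k$ acts trivially on the target factor it is skew-adjoint for the (still symmetric) pairing $(\cdot,\cdot)_{\mbox{\tiny E}}$. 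Integration by parts---legitimate because the Clifford connection preserves $(\cdot,\cdot)_{\mbox{\tiny E}}$ (the spin connection preserving $\omega_1^\ast$, the pullback connection $\omega_2$) and the boundary term vanishes on the closed $M_1$---then makes $\DDD$ formally \emph{self-adjoint}, so that $\int(\psi,\DDD\psi)_{\mbox{\tiny E}}=\int(\DDD\psi,\psi)_{\mbox{\tiny E}}$ is a tautology rather than the sign-flip that killed the Majorana action. To upgrade ``not forced to vanish'' to genuine non-triviality I would expand $\psi$ in the Weyl frame \eqref{paritydec} and exhibit the surviving holomorphic kinetic term (schematically $\int\omega_2(\psi_+,\partial\psi_+)$), which is visibly not identically zero.

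For the last assertion I would use that on a Riemann surface any two metric connections differ only by a torsion one-form, $\nabla^{g}=\nabla^{\mathrm{LC}}+A\ot{\goth I}$ as in \eqref{vantor}, whose lift to the spinors is $\tfrac12 A\ot\gamma^1\gamma^2$. The corresponding Dirac operators therefore differ by the purely \emph{algebraic} (order-zero) term $\tfrac12 A_k\,\gamma^k\gamma^1\gamma^2$; since $\gamma^1\gamma^1\gamma^2=\gamma^2$ and $\gamma^2\gamma^1\gamma^2=-\gamma^1$, this is a single Clifford element $\gamma(\beta)$ for a one-form $\beta$. Its contribution to \eqref{da} is the pointwise quadratic form $(\psi,\gamma(\beta)\psi)_{\mbox{\tiny E}}$, and because $\gamma(\beta)$ is skew-adjoint for the symmetric pairing one has $(\psi,\gamma(\beta)\psi)_{\mbox{\tiny E}}=-(\gamma(\beta)\psi,\psi)_{\mbox{\tiny E}}=-(\psi,\gamma(\beta)\psi)_{\mbox{\tiny E}}$, so it vanishes identically, no integration by parts being needed. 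Hence the action depends only on $\nabla^{\mathrm{LC}}$. The main obstacle throughout is the consistent bookkeeping of the symmetry signs: one must keep straight how the antisymmetry of $\omega_2$ and $\omega_1^\ast$, the duality $\bbS\simeq\bbS^\ast$, and the self-adjoint/skew dichotomy for $\gamma$ combine, so that the derivative term survives while the algebraic torsion term cancels. It is precisely here---and in checking that the kinetic term is \emph{not} itself annihilated by the same pointwise argument---that a single misplaced sign would collapse the entire statement.
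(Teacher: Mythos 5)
Your proposal is correct and takes essentially the same route as the paper: both rest on~\eqref{symrep}, i.e.\ the symplectic skew-adjointness of the Clifford action together with the symmetry of the pairing ${\big(\cdot,\cdot\big)}_{\!\mbox{\tiny E}}$, so that the order-zero torsion term satisfies ${\big(\psi,\gamma^k\gamma^1\gamma^2\psi\big)}_{\!\mbox{\tiny E}}=0$ pointwise and hence ${\big(\psi,\dda\psi\big)}_{\!\mbox{\tiny E}}={\big(\psi,\ddd\psi\big)}_{\!\mbox{\tiny E}}$, while the same skewness turns the forced vanishing of the Majorana Dirac action into formal symmetry of $\ddd$. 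You merely spell out what the paper's two-line proof leaves implicit---the reduction $\gamma^1\gamma^1\gamma^2=\gamma^2$, $\gamma^2\gamma^1\gamma^2=-\gamma^1$, the evident reality, and an explicit nonvanishing kinetic term witnessing non-triviality---which is an elaboration of, not a departure from, the paper's argument.
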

\begin{proof}
	The prove of the statement relies crucially on~\eqref{symrep}.
	Consequently, for $k = 1,2$ one gets
	\begin{equation}
		{\big(\psi,\gamma^k\gamma^1\gamma^2\psi\big)}_{\!\mbox{\tiny E}} = 0\,.
	\end{equation}
	Hence,
	\begin{equation}
		{\big(\psi,\dda\psi\big)}_{\!\mbox{\tiny E}} = {\big(\psi,\ddd\psi\big)}_{\!\mbox{\tiny E}}\,.
	\end{equation}
\end{proof}
Consequently, the action functionals of Dirac harmonic maps~\eqref{adhm} and super harmonic maps~\eqref{sadhm1} can be realized also for the Clifford algebra \(Cl_{2,0}\) using the twisted spinor bundle \(E\) instead of \(\mathcal{E}\) in the Dirac action.
We put emphasize, that already in this setting both functionals have all desired symmetry properties besides supersymmetry.
In particular, the Dirac action does not vanish and the functional~\eqref{sadhm1} is still real-valued as opposed to its supersymmetric analogue.
Notice, however, that the realization of \(E\) provides a severe restriction on the target manifolds of the map $\varphi$.
For example, it does not work in the most simple case $(M_2,g_2)=\rr$.

We now want to check whether the action functional of Dirac harmonic map is invariant to the first order under the supersymmetry transformations
\begin{equation}
\label{susytraf2}
	\begin{split}
		\delta_{\goth s}\varphi &= \psi({\goth s}) = \omega_{\mbox{\tiny$\bbS^\ast$}}(\psi,{\tilde{\goth s}})\in {\goth S}ec(M_1,\varphi^\ast TM_2)\,,\\
		\delta_{\goth s}\psi &= \delta_\gamma\big(d\varphi\big){\tilde{\goth s}}\in{\goth S}ec(M_1,E)\,.
	\end{split}
\end{equation}
Here, ${\goth s}\in\Gamma(M_1,\bbS)$ denotes an arbitrary spinor field and ${\tilde{\goth s}}\in\Gamma(M_1,\bbS^\ast)$ its symplectic dual that is defined by ${\tilde{\goth s}} \coloneqq \omega_1({\goth s},\cdot)$.

Notice that our convention for the symplectic dual is the following: for given symplectic orthonormal basis ${\goth s}_1,{\goth s}_2\in\bbS$, with dual basis ${\goth s}^1,{\goth s}^2\in\bbS^\ast$, the symplectic dual reads: ${\tilde{\goth s}}_k \coloneqq \omega_1({\goth s}_k,\cdot) = \epsilon_{kj}{\goth s}^j\in\bbS^\ast$.
Similarly, we define ${\tilde{\goth s}}^l \coloneqq \omega^\ast_1(\cdot,{\goth s}^l) = -\epsilon^{li}{\goth s}_i\in\bbS$, such that for all $k,l=1,2$ we have ${\tilde{\goth s}}^l({\tilde{\goth s}}_k) \coloneqq {\tilde{\goth s}}_k({\tilde{\goth s}}^l) = -{\goth s}^l({\goth s}_k) = -\delta^l_{\;k}$.

To simplify the discussion, we restrict ourselves to the simple case of a trivial spinor bundle over \(M_1=\cc\).
Although the integral~\eqref{adhm} is not well defined in this case, the variation of the integrand is of course.
\begin{proposition}
	Let ${\goth s}_0$ be a constant spinor field.
	The variation of the functional of Dirac harmonic maps~\eqref{adhm} that is defined to first order by the supersymmetry transformations~\eqref{susytraf2} is given by
	\begin{equation}
	\label{varform1}
		\begin{split}
			\delta_{\mathfrak{s}_0}\|d\varphi\|^2 &= \delta^{kl}\, g_2\big(d\delta_{\mathfrak{s}_0}\varphi(e_k),d\varphi(e_l)\big) = -{\big<\psi,{\tilde{\goth s}}_0\!\ot\!\triangle\varphi\big>}_{\!\mbox{\tiny E}} + \Div J_{\mbox{\tiny$\varphi$}}\,,\\
			\delta_{\mathfrak{s}_0} {\big(\psi,\ddd\psi\big)}_{\!\mbox{\tiny$E$}} &=\epsilon^{kl}\,\delta_{\mathfrak{s}_0}\psi_k\!\cdot\ddd\psi_l = -{\big(\psi,{\tilde{\goth s}}_0\!\ot\!\triangle\varphi\big)}_{\!\mbox{\tiny E}} + \Div J_{\mbox{\tiny$\psi$}}\,,
		\end{split}
	\end{equation}
	with the local tangent vector fields $J_{\mbox{\tiny$\varphi$}}, J_{\mbox{\tiny$\psi$}}\in\Gamma(M_1,TM_1)$ being given by
	\begin{equation}
		\begin{split}
			2J_{\mbox{\tiny$\varphi$}} &\coloneqq {\big<\psi,{\tilde{\goth s}}_0\!\ot\!\grad\varphi\big>}_{\!\mbox{\tiny E}}\,,\\
			2J_{\mbox{\tiny$\psi$}} &\coloneqq -2{\big(\psi,\Theta_x^\sharp\gamma(d\varphi){\goth s}_0\big)}_{\!\mbox{\tiny E}}\,.
		\end{split}
	\end{equation}
\end{proposition}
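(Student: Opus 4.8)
The plan is to vary the two summands of the integrand of~\eqref{adhm} one at a time and to exploit the flat model at hand. Since $M_1=\cc$ carries the trivial spin structure and $(M_2,g_2)$ is flat, both the spin connection on $\bbS^\ast$ and the pull-back connection on $\varphi^\ast TM_2$ are trivial; hence $\ddd$ may be replaced by $\dda$ (Lemma~\ref{indep}), it acts componentwise, and $\triangle\varphi=\delta^{kl}\partial_k\partial_l\varphi$ is the ordinary trace Laplacian. In particular the variation of the Dirac operator induced by $\delta_{\mathfrak{s}_0}\varphi$ drops out, because the connection entering $\ddd$ does not depend on $\varphi$ in the flat case. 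Throughout I would keep careful track of the extra signs that arise whenever the anti-commuting coefficients~\eqref{oddness1} are permuted.

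For the first summand I start from the first equality in the statement, $\delta_{\mathfrak{s}_0}\|d\varphi\|^2=\delta^{kl}\,g_2\big(d\delta_{\mathfrak{s}_0}\varphi(e_k),d\varphi(e_l)\big)$, which is just the Leibniz rule for the flat metric $g_2$, and substitute $\delta_{\mathfrak{s}_0}\varphi=\omega_{\mbox{\tiny$\bbS^\ast$}}(\psi,{\tilde{\goth s}}_0)$ from~\eqref{susytraf2}. A single integration by parts then splits the expression into a divergence $\Div J_{\mbox{\tiny$\varphi$}}$, with $2J_{\mbox{\tiny$\varphi$}}=\big<\psi,{\tilde{\goth s}}_0\!\ot\!\grad\varphi\big>_{\!\mbox{\tiny E}}$, and a bulk piece $-g_2(\delta_{\mathfrak{s}_0}\varphi,\triangle\varphi)$. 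Rewriting the latter with the symplectic-duality conventions fixed before the statement, so that the contraction $\omega_{\mbox{\tiny$\bbS^\ast$}}(\psi,{\tilde{\goth s}}_0)$ turns into the $E$-pairing of $\psi$ against ${\tilde{\goth s}}_0\!\ot\!\triangle\varphi$, gives exactly $-\big<\psi,{\tilde{\goth s}}_0\!\ot\!\triangle\varphi\big>_{\!\mbox{\tiny E}}$, which is the first line of~\eqref{varform1}.

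For the Dirac term I would vary $(\psi,\ddd\psi)_{\mbox{\tiny E}}$ bilinearly; since $\delta_{\mathfrak{s}_0}$ leaves the operator unchanged, only $(\delta_{\mathfrak{s}_0}\psi,\ddd\psi)_{\mbox{\tiny E}}+(\psi,\ddd\,\delta_{\mathfrak{s}_0}\psi)_{\mbox{\tiny E}}$ remains. The symmetry property~\eqref{symrep}, together with the oddness~\eqref{oddness1} that makes the $\omega_2$-pairing symmetric, shows that $\dda$ is self-adjoint for $(\cdot,\cdot)_{\mbox{\tiny E}}$ up to a divergence, so the two terms combine into the single expression $\epsilon^{kl}\,\delta_{\mathfrak{s}_0}\psi_k\!\cdot\ddd\psi_l$ recorded in the statement, plus $\Div(\cdots)$. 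Inserting $\delta_{\mathfrak{s}_0}\psi=\delta_\gamma(d\varphi){\tilde{\goth s}}_0=\gamma^m{\tilde{\goth s}}_0\!\ot\!\partial_m\varphi$ and commuting the Clifford factor $\gamma^m$ past the $\gamma^j\partial_j$ of $\ddd$, the relation $\gamma^m\gamma^j+\gamma^j\gamma^m=2\delta^{mj}$ of $Cl_{2,0}$ extracts a symmetric part that produces the Laplacian $\delta^{mj}\partial_m\partial_j\varphi=\triangle\varphi$, hence the bulk term $-(\psi,{\tilde{\goth s}}_0\!\ot\!\triangle\varphi)_{\mbox{\tiny E}}$, while the antisymmetric part (proportional to $\gamma^1\gamma^2$) I would absorb into $\Div J_{\mbox{\tiny$\psi$}}$ with $2J_{\mbox{\tiny$\psi$}}=-2(\psi,\Theta_x^\sharp\gamma(d\varphi){\goth s}_0)_{\mbox{\tiny E}}$.

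The main obstacle is bookkeeping rather than conceptual. The signs generated by permuting the anti-commuting coefficients~\eqref{oddness1} and by passing to the symplectic duals ${\tilde{\goth s}}$ must be tracked so that the symmetric ($g_2$) and symplectic ($\omega_2$) pairings emerge with the normalisations displayed. The genuinely delicate point is to verify that the antisymmetric remainder $\gamma^m\gamma^j-\gamma^j\gamma^m$ in the Dirac computation contributes only a divergence: this is exactly where the skew-symmetry encoded in~\eqref{symrep} is needed, in the same way that it forces ${(\psi,\gamma^k\gamma^1\gamma^2\psi)}_{\mbox{\tiny E}}=0$ in the proof of Lemma~\ref{indep}. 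Once that is settled, collecting the boundary contributions into the stated vector fields $J_{\mbox{\tiny$\varphi$}}$ and $J_{\mbox{\tiny$\psi$}}$ is routine.
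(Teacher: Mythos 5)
Your proposal is correct and follows exactly the route the paper intends: the paper's own proof is the single sentence ``shown to hold true by a straightforward calculation,'' and your plan --- integrate by parts on the harmonic term after inserting $\delta_{\mathfrak{s}_0}\varphi$, then use the skew-symplectic Clifford action and the relation $\gamma^m\gamma^j+\gamma^j\gamma^m=2\delta^{mj}$ to extract $\triangle\varphi$ from the Dirac term --- is precisely that calculation fleshed out, including the crucial observation that the two bulk terms carry the different pairings $\langle\cdot,\cdot\rangle_{\mbox{\tiny E}}$ and $(\cdot,\cdot)_{\mbox{\tiny E}}$.

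One justification in your third and fourth paragraphs is off, though the step itself survives. You claim the antisymmetric remainder $\gamma^m\gamma^j-\gamma^j\gamma^m$ contributes only a divergence ``exactly where the skew-symmetry encoded in~\eqref{symrep} is needed, in the same way that it forces ${(\psi,\gamma^k\gamma^1\gamma^2\psi)}_{\mbox{\tiny E}}=0$'' in Lemma~\ref{indep}. That quadratic identity pairs $\psi$ with itself and exploits the (anti)symmetry of the coefficients~\eqref{oddness1} under exchange of the \emph{same} field; here the remainder has the form $\epsilon^{mj}\,\bigl(\tilde{\mathfrak{s}}_0\ot\partial_m\varphi,\;\gamma^1\gamma^2\,\partial_j\psi\bigr)_{\!\mbox{\tiny E}}$ with the two distinct fields $\mathfrak{s}_0$ and $\psi$, so no such cancellation identity applies. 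The correct mechanism is more elementary: since $\mathfrak{s}_0$ is constant and the mixed second derivatives of $\varphi$ commute, $\epsilon^{mj}\partial_j\partial_m\varphi=0$, so the $\epsilon^{mj}$-antisymmetrized term is literally $\partial_j$ of $\epsilon^{mj}\bigl(\tilde{\mathfrak{s}}_0\ot\partial_m\varphi,\gamma^1\gamma^2\psi\bigr)_{\!\mbox{\tiny E}}$ --- a total divergence which, together with the boundary term from the symmetric (Laplacian) piece, is what the single current $J_{\mbox{\tiny$\psi$}}$ packages via $\Theta^\sharp_x=\frac{1}{2}\gamma^k\ot e_k$ (this also accounts for the factors of $2$ you deferred to bookkeeping). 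Note too that the constancy of $\mathfrak{s}_0$, which the proposition assumes, is essential precisely at this point; for non-constant $\mathfrak{s}$ the extra $d\mathfrak{s}$-terms are what Proposition~\ref{propSDH} later controls by demanding holomorphicity.
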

\begin{proof}
	The statement is shown to hold true by a straightforward calculation.
\end{proof}

The statement clearly demonstrates that, using \({\left(\cdot, \cdot\right)}_E\) instead of \({\left\langle\cdot, \cdot\right\rangle}_E\) for the Dirac-term, the functional of Dirac-harmonic maps is not invariant with respect to the supersymmetry transformations~\eqref{susytraf2} even for constant variational spinor fields.
The detailed calculation demonstrates the failure in vanishing of the variation has its origin in the different inner products used on the twisted spinor bundle for the “bosonic” and “fermionic” parts of the functional of Dirac harmonic maps.
Of course, this holds true also for the case of non-trivial spinor bundle and the more general functional~\eqref{sadhm1} with supersymmetry transformations~\eqref{susytraf1a}--\eqref{susytraf1d}.

On the other hand, the only way to circumvent a trivial Dirac action on Riemann surfaces (for real spinors) consists in the usage of the symplectic form on the (real) spinor bundle instead of the usual metric.
Again, this guarantees (symplectic) skew-symmetry of the Clifford action necessary for a symmetric Dirac operator.

We thus proceed with a geometrical construction very similar to what has been used in order to derive certain functionals, like~\eqref{dhym}, from Dirac operators of simple type (c.f.~\cite{T-EHACCFGF}).
The symmetry property~\eqref{oddspinor} alone has not proven sufficient to reproduce super symmetry.
Therefore we specify more properties of odd spinors and give another construction of \({\left(\cdot, \cdot\right)}_E\).
That is, for given Grassmann algebra $\Lambda$ we consider the \emph{Grassmann extension} of the twisted spinor bundle~\eqref{twistspin2}
\begin{equation}
\label{twistspin3}
	E_{\mbox{\tiny$\Lambda$}} \coloneqq \bbS^\ast\!\ot\!\varphi^\ast TM_2\!\ot\!\Lambda \longrightarrow M_1\\
\end{equation}
Similarly, one may replace the ordinary tangent bundle by its Grassmann extension $TM_1\ot\Lambda\rightarrow M_1$, etc..

Clearly, the Grassmann extension of any vector bundle contains the latter as a distinguished sub-vector bundle.
For instance,~\eqref{twistspin3} contains~\eqref{twistspin2} because of the canonical embedding
\begin{equation}
	\begin{split}
		E&\hookrightarrow E_{\mbox{\tiny$\Lambda$}}\\
		{\goth z}&\mapsto{\goth z}\ot 1\,.
	\end{split}
\end{equation}
Of course, this holds true similarly for every Grassmann extension.
Accordingly, by a slight abuse of notation, we do not make a distinction between a vector bundle and its Grassmann extension.
That is, for given $\Lambda$ every (real) vector bundle is considered to be contained into its Grassmann extension.
By construction all vector bundles are naturally $\zz_2-$graded. By an ``even/odd'' section we thus mean section restricted to $\Lambda^{\!\pm}$.

We consider the following graded inner (fiber) product with values in the $\zz_2-$graded ring $\Lambda$:
\begin{equation}
\label{gradprod}
	\begin{split}
		{\bigl(\cdot,\cdot\bigr)}_{\!\mbox{\tiny$E$}}\colon E\times_M E&\longrightarrow\Lambda\\
		({\goth z}_1\ot\lambda_1,{\goth z}_2\ot\lambda_2) &\mapsto \langle{\goth z}_1,{\goth z}_2\rangle_{\!\mbox{\tiny$E$}}\,\lambda_1\wedge\lambda_2\,.
	\end{split}
\end{equation}
Here, for all homogeneous elements ${\goth z}_k = {\goth s}^\ast_k\ot y_k\in E\;(k=1,2)$ we put
\begin{equation}
	\langle{\goth s}^\ast_1\ot y_1,{\goth s}^\ast_2\ot y_2\rangle_{\!\mbox{\tiny$E$}} \coloneqq \omega^\ast_1({\goth s}^\ast_1,{\goth s}^\ast_2)\,\varphi^\ast\!g_2(y_1,y_2)\in\rr\,.
\end{equation}
It follows that for all elements of definite parity
\begin{equation}
	\big({\goth z},{\goth w}\big)_{\!\mbox{\tiny$E$}} = -(-1)^{|{\goth z}||{\goth w}|}\big({\goth w},{\goth z}\big)_{\!\mbox{\tiny$E$}}\,.
\end{equation}

\begin{lemma}
	When restricted to odd sections, the twisted spin-Dirac operator
	$\ddd = \delta_\gamma(\nabla^{\bbS\ot\varphi^\ast TM_2})$ is symmetric with respect to the
	scalar product
	\begin{equation}
	\begin{split}
		\big(\!\!\big(\phi,\psi\big)\!\!\big) &\coloneqq \int_{M_1}\big(\phi,\psi\big)_{\!\mbox{\tiny E}}\,dvol(g_1)\\
			&= \big(\!\!\big(\psi,\phi\big)\!\!\big)\in\Lambda^{\!+}\,.
		\end{split}
	\end{equation}
\end{lemma}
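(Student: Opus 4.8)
The plan is to verify the symmetry $\big(\!\!\big(\phi,\ddd\psi\big)\!\!\big) = \big(\!\!\big(\ddd\phi,\psi\big)\!\!\big)$ for odd sections $\phi,\psi$ by the standard integration-by-parts argument for the Dirac operator of a Clifford connection, paying close attention to the signs produced by the symplectic form and by the Grassmann grading. First I would record the three structural inputs. Restricted to odd sections the fibre product is symmetric, $\big(\phi,\psi\big)_{\mbox{\tiny E}} = \big(\psi,\phi\big)_{\mbox{\tiny E}}$, because $|\phi|=|\psi|=1$ turns the sign $-(-1)^{|\phi||\psi|}$ into $+1$; this is exactly the displayed identity, and it is what singles out \emph{symmetry} rather than skew-symmetry of $\ddd$. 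Next, by~\eqref{symrep} the Clifford action is skew-symmetric with respect to $\omega_1^\ast$, and since $\gamma^k$ acts only on the $\bbS^\ast$-factor as an even (Grassmann-degree zero) operator, it may be moved across the product with a single sign and no extra Grassmann sign, giving $\big(\phi,\gamma^k\psi\big)_{\mbox{\tiny E}} = -\big(\gamma^k\phi,\psi\big)_{\mbox{\tiny E}}$. Finally, the connection $\nabla^{\bbS\ot\varphi^\ast TM_2}$ is compatible with $\big(\cdot,\cdot\big)_{\mbox{\tiny E}}$ and Clifford-compatible, so that $e_k\big(\phi,\psi\big)_{\mbox{\tiny E}} = \big(\nabla_{e_k}\phi,\psi\big)_{\mbox{\tiny E}} + \big(\phi,\nabla_{e_k}\psi\big)_{\mbox{\tiny E}}$ and $\nabla_{e_k}\gamma^k = \gamma(\nabla_{e_k}e^k)$.

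With these in hand I would compute the pointwise difference in a local $g_1$-orthonormal frame. Writing $\ddd\psi = \gamma^k\nabla_{e_k}\psi$ and moving $\gamma^k$ across by the skew-symmetry gives $\big(\phi,\ddd\psi\big)_{\mbox{\tiny E}} = -\big(\gamma^k\phi,\nabla_{e_k}\psi\big)_{\mbox{\tiny E}}$. Applying the Leibniz rule to rewrite $\big(\gamma^k\phi,\nabla_{e_k}\psi\big)_{\mbox{\tiny E}} = e_k\big(\gamma^k\phi,\psi\big)_{\mbox{\tiny E}} - \big(\nabla_{e_k}(\gamma^k\phi),\psi\big)_{\mbox{\tiny E}}$, and then expanding $\nabla_{e_k}(\gamma^k\phi) = \gamma(\nabla_{e_k}e^k)\phi + \gamma^k\nabla_{e_k}\phi$, isolates the term $\big(\gamma^k\nabla_{e_k}\phi,\psi\big)_{\mbox{\tiny E}} = \big(\ddd\phi,\psi\big)_{\mbox{\tiny E}}$. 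I would then define the vector field $J$ as the $g_1$-dual of the one-form $v\mapsto -\big(\gamma(v^\flat)\phi,\psi\big)_{\mbox{\tiny E}}$ and check that the remaining contributions — namely $-e_k\big(\gamma^k\phi,\psi\big)_{\mbox{\tiny E}}$ together with the Clifford-compatibility term $\big(\gamma(\nabla_{e_k}e^k)\phi,\psi\big)_{\mbox{\tiny E}}$ and the frame connection coefficients — assemble precisely into $\Div J$. This yields $\big(\phi,\ddd\psi\big)_{\mbox{\tiny E}} = \big(\ddd\phi,\psi\big)_{\mbox{\tiny E}} + \Div J$ pointwise.

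Integrating this identity over the closed compact surface $M_1$ and applying the divergence theorem removes $\int_{M_1}\Div J\,dvol(g_1) = 0$, which gives $\big(\!\!\big(\phi,\ddd\psi\big)\!\!\big) = \big(\!\!\big(\ddd\phi,\psi\big)\!\!\big)$, i.e.\ the asserted symmetry. The main obstacle is purely the sign bookkeeping: one must confirm that the single minus sign from the $\omega_1^\ast$-skew-symmetry of the Clifford action, combined with the parity-induced $+1$ for odd sections, produces a genuinely symmetric (not skew-symmetric) operator, and that the Grassmann-valued nature of $\big(\cdot,\cdot\big)_{\mbox{\tiny E}}$ introduces no further signs precisely because both $\gamma^k$ and the connection are even. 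A secondary point to verify is that the frame-dependent correction terms arising from $\nabla_{e_k}\gamma^k$ and from $\nabla_{e_k}e_k$ cancel, so that $J$ is globally well-defined and $\Div J$ is the genuine covariant divergence; this is exactly where the Clifford-compatibility of $\nabla^{\bbS\ot\varphi^\ast TM_2}$ is essential.
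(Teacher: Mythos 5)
Your proof is correct and takes essentially the same route as the paper's: the paper likewise combines the skew-symplecticity of the Clifford action (its identity $\big(\Theta^\sharp\phi,\psi\big)_{E} + \big(\phi,\Theta^\sharp\psi\big)_{E} = 0$) with the parallelism of $\Theta$ coming from $\ddd$ being a quantized Clifford connection, assembles these into the pointwise identity $\Div J = -\big(\ddd\phi,\psi\big)_{E} + \big(\phi,\ddd\psi\big)_{E}$ with $J = 2\big(\phi,\Theta^\sharp\psi\big)_{E}$ (which coincides with your $J$), and integrates over the closed surface. Your additional sign bookkeeping --- oddness rendering the pairing symmetric, and $\gamma^k$ being Grassmann-even so that no extra parity signs arise --- is exactly the content the paper's terse ``the calculation yields'' leaves implicit.
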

\begin{proof}
	The proof is straightforward and makes use of $\ddd$ being a quantized Clifford connection.
	Hence,
	\begin{equation}
		\nabla^{\mbox{\tiny$T^\ast\!M_1\!\ot\!\bbS\!\ot\!\varphi^{\!\ast}\!TM_2$}}\Theta = 0\,.
	\end{equation}
	Furthermore, the Clifford action is skew-symplectic. Therefore,
	\begin{equation}
		\big(\Theta^\sharp\phi,\psi\big)_{\!\mbox{\tiny E}} + \big(\phi,\Theta^\sharp\psi\big)_{\!\mbox{\tiny E}} = 0\,.
	\end{equation}
	Altogether the calculation yields
	\begin{equation}
		\Div J = -\big(\ddd\phi,\psi\big)_{\!\mbox{\tiny E}} + \big(\phi,\ddd\psi\big)_{\!\mbox{\tiny E}}\,,
	\end{equation}
	with the (even) tangent vector field $J \in\Gamma(M_1,TM_1) $ being given by
	\begin{equation}
		J \coloneqq 2\,\big(\phi,\Theta^\sharp\psi\big)\,.
	\end{equation}
\end{proof}

As a consequence, the Dirac action for odd (twisted) Majorana spinors
\begin{equation}
\big(\!\!\big(\psi,\ddd\psi\big)\!\!\big) = \int_{M_1}\big(\psi,\ddd\psi\big)_{\!\mbox{\tiny E}}\,dvol(g_1) \in\Lambda^{\!+}
\end{equation}
is non-trivial as opposed to the case of even Majorana spinors.
Similar to Lemma~\eqref{indep} one may prove that the Dirac action is actually independent of the metric connection used on the spinor bundle.

\begin{proposition}\label{propSDH}
	Assume that the odd variational spinor field ${\goth s}\in\Gamma_{\mbox{\tiny hol}}(M_1,\bbS^-)$ is holomorphic
	and $(M_2,g_2)$ be flat. The functional
	\begin{equation}
	\label{totdiractionres8}
		\mathcal{A}_{\mbox{\tiny SDH}}(\varphi,\psi) \coloneqq \!\int_{M_1}\!\!\Big(\|d\varphi\|^2 + {\big(\psi,\dda\psi\big)}_{\!\mbox{\tiny E}}\Big)dvol(g_1) \in \Lambda^{\!+}
	\end{equation}
	is stationary with respect to the variation determined to first order by the supersymmetry transformations~\eqref{susytraf2}.
\end{proposition}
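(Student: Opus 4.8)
The plan is to show that the first-order variation $\delta_{\goth s}\mathcal{A}_{\mbox{\tiny SDH}}$ is the integral of a total divergence, which then vanishes on the closed surface $M_1$. As a preparatory move I would invoke the connection-independence of the Dirac action established just above (the odd analogue of Lemma~\ref{indep}) to replace $\dda$ by the plain twisted spin-Dirac operator $\ddd$ in~\eqref{totdiractionres8}; this is free of charge and lets me use throughout that $\ddd$ is a quantized Clifford connection, so that $\Theta$ is parallel and the Clifford action is skew-symplectic, exactly as in the Lemma immediately preceding the proposition. I would then vary the bosonic summand $\|d\varphi\|^2$ and the fermionic summand ${\big(\psi,\ddd\psi\big)}_{\!\mbox{\tiny E}}$ separately, keeping careful track of the grading signs carried by the odd parameter ${\goth s}$.

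For the bosonic term I would differentiate, commute $\delta_{\goth s}$ through $d$, and integrate by parts once to produce the tension field $\triangle\varphi$, obtaining a bulk contribution proportional to $g_2(\delta_{\goth s}\varphi,\triangle\varphi)$ modulo a divergence. The decisive point is that $\delta_{\goth s}\varphi=\omega_{\mbox{\tiny$\bbS^\ast$}}(\psi,{\tilde{\goth s}})$ is built from the \emph{symplectic} pairing on the spinor slot, so that $g_2(\delta_{\goth s}\varphi,\triangle\varphi)$ is precisely the graded product ${\big(\psi,{\tilde{\goth s}}\ot\triangle\varphi\big)}_{\!\mbox{\tiny E}}$ of~\eqref{gradprod}. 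Thus both summands will be governed by one and the same inner product ${\big(\cdot,\cdot\big)}_{\!\mbox{\tiny E}}$, in contrast to the even computation underlying~\eqref{varform1}, where the two bulk terms lived in different products and therefore could not cancel. I expect the bosonic bulk term to read $-2{\big(\psi,{\tilde{\goth s}}\ot\triangle\varphi\big)}_{\!\mbox{\tiny E}}$.

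For the fermionic term I would note first that $\delta_{\goth s}$, carrying the odd parameter built in, is itself an even operation, so the ordinary Leibniz rule applies; combining it with the symmetry of $\ddd$ (preceding Lemma) and the graded symmetry ${\big({\goth z},{\goth w}\big)}_{\!\mbox{\tiny E}}={\big({\goth w},{\goth z}\big)}_{\!\mbox{\tiny E}}$ valid for odd sections, the variation collapses, modulo a divergence, to $2{\big(\delta_{\goth s}\psi,\ddd\psi\big)}_{\!\mbox{\tiny E}}$. Substituting $\delta_{\goth s}\psi=\delta_\gamma(d\varphi){\tilde{\goth s}}=\gamma^k{\tilde{\goth s}}\ot\partial_k\varphi$ and integrating by parts once more to throw $\ddd$ back onto this section, I would expand $\ddd(\gamma^k{\tilde{\goth s}}\ot\partial_k\varphi)$. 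Because $\gamma$ is parallel, the term carrying the Hessian of $\varphi$ becomes $\gamma^m\gamma^k{\tilde{\goth s}}\ot\nabla_m\partial_k\varphi$; the Clifford relation $\gamma^m\gamma^k+\gamma^k\gamma^m=2\delta^{mk}$ together with the symmetry of $\nabla_m\partial_k\varphi$ collapses this to ${\tilde{\goth s}}\ot\triangle\varphi$, producing (via the odd graded symmetry again) the bulk contribution $+2{\big(\psi,{\tilde{\goth s}}\ot\triangle\varphi\big)}_{\!\mbox{\tiny E}}$. Here both hypotheses are consumed: flatness of $(M_2,g_2)$ makes the pullback connection $\varphi$-independent, so that $\delta_{\goth s}$ commutes with $\ddd$ and no curvature coupling obstructs the collapse; and holomorphicity of ${\goth s}\in\Gamma_{\mbox{\tiny hol}}(M_1,\bbS^-)$ forces $\ddd\,{\tilde{\goth s}}=0$ and makes $\nabla{\tilde{\goth s}}$ of pure type, so that the remaining terms carrying a derivative of ${\tilde{\goth s}}$ either vanish by the Clifford algebra or reorganise into a divergence.

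Collecting the two pieces, the bulk terms $-2{\big(\psi,{\tilde{\goth s}}\ot\triangle\varphi\big)}_{\!\mbox{\tiny E}}$ and $+2{\big(\psi,{\tilde{\goth s}}\ot\triangle\varphi\big)}_{\!\mbox{\tiny E}}$ cancel, leaving $\delta_{\goth s}\mathcal{A}_{\mbox{\tiny SDH}}$ equal to the integral of a divergence, which is zero on the closed $M_1$. The hard part will be the sign and grading bookkeeping that turns the defect of~\eqref{varform1} into a cancellation: one must verify that passing to odd sections and to the single graded product ${\big(\cdot,\cdot\big)}_{\!\mbox{\tiny E}}$ really supplies the \emph{relative minus sign} between the bosonic and fermionic bulk terms, rather than the equal signs that left a non-vanishing obstruction in the even case. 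Closely intertwined with this is the need to confirm that holomorphicity and flatness genuinely reduce every leftover derivative-of-${\tilde{\goth s}}$ term and every would-be curvature term to a total divergence, since a single mishandled boundary term would already spoil the stationarity.
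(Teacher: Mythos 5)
Your overall architecture coincides with the paper's: pass to $A=0$ using connection-independence of the Dirac term, vary the two summands separately, observe that both $\triangle\varphi$ bulk terms now live in the \emph{same} graded product ${\big(\cdot,\cdot\big)}_{\!\mbox{\tiny E}}$ and cancel with opposite signs (the paper indeed obtains $-2\big(\psi,\triangle\varphi\ot\tilde{\goth s}\big)_{\!\mbox{\tiny E}}$ and $+2\big(\psi,\triangle\varphi\ot\tilde{\goth s}\big)_{\!\mbox{\tiny E}}$ plus divergences, exactly as you predict), leaving one term carrying the derivative of the variational spinor, of the form $-2\big(\psi,\gamma^k\gamma(d\varphi)\,d{\goth s}(e_k)\big)_{\!\mbox{\tiny E}}$. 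The genuine gap is in how you dispose of this leftover. You assert that holomorphicity of ${\goth s}$ forces $\ddd\,\tilde{\goth s}=0$. In the paper's conventions this is false: holomorphicity is the vanishing of the spin-$3/2$ (twistor) projection, ${\goth q}(d{\goth s})=0$, \emph{not} of the spin-$1/2$ projection $\ddd{\goth s}=\delta_\gamma(d{\goth s})$. Concretely, with ${\goth s}=u\,{\goth w}$ and $\theta=dz$ one has $d{\goth s}=(\partial_z u)\,\theta\ot{\goth w}+(\partial_{\bar z}u)\,\bar\theta\ot{\goth w}$ (plus conjugates); by~\eqref{eq:PQProjectors} the ${\goth q}$-part is $(\partial_{\bar z}u)\,\bar\theta\ot{\goth w}$, which holomorphicity kills, while $\ddd{\goth s}$ is proportional to $\partial_z u$ and survives for every non-constant holomorphic $u$. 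Had both vanished, ${\goth s}$ would be parallel and the proposition would add nothing to the constant-spinor computation~\eqref{varform1}.

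The correct mechanism, which is the crux of the paper's proof, is algebraic and two-dimensional: the leftover satisfies $2\gamma^k\gamma(d\varphi)\,d{\goth s}(e_k)={\goth q}(d{\goth s})\grad\varphi$. Equivalently, splitting $d{\goth s}={\goth p}(d{\goth s})+{\goth q}(d{\goth s})$, on the ${\goth p}$-part one has $\partial_j{\goth s}=\tfrac12\gamma_j\,\ddd{\goth s}$, and the identity $\gamma^k\gamma^l\gamma_k=2\gamma^l-\gamma^l\gamma^k\gamma_k=0$ (valid since $\gamma^k\gamma_k=2$ in dimension two) makes the $\ddd{\goth s}$-carrying contribution vanish identically, with no need for $\ddd{\goth s}=0$; the only obstruction to stationarity is the ${\goth q}$-part, which vanishes \emph{pointwise} (not merely up to a divergence) precisely when ${\goth s}$ is holomorphic. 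Your phrase that $\nabla\tilde{\goth s}$ is ``of pure type'' so the rest vanishes ``by the Clifford algebra'' gestures at this, but as written your argument leans on the false identity $\ddd\,\tilde{\goth s}=0$ and never supplies the identity $\gamma^k\gamma^l\gamma_k=0$ or the equivalent projector computation, so the decisive term is not actually accounted for. The repair is local: replace ``$\ddd\,\tilde{\goth s}=0$'' by ``${\goth q}(d{\goth s})=0$'' and carry out the two-line Clifford computation above. A minor further remark: the paper performs the computation in isothermal coordinates (the setting introduced before~\eqref{varform1} is the model $M_1=\cc$, where only the variation of the integrand is meaningful) and then glues the local holomorphicity condition globally via conformal invariance; your appeal to closedness of $M_1$ to discard the divergence terms is the legitimate global counterpart of that step, but should be flagged as such.
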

\begin{proof}
	As the integrand in Equation~\eqref{totdiractionres8} is conformally invariant, we are allowed to work in coordinates \(x^1, x^2\) on \(M_1\) such that \(g_1={\left(dx^1\right)}^2 + {\left(dx^2\right)}^2\).
	Furthermore, as the integrand does not depend on \(A\), we will work with \(A=0\).
	It follows that in the specific coordinates chosen, the covariant derivative can be expressed as \(d_\nabla\mathfrak{s} = d\mathfrak{s}\).
	For arbitrary but fixed $t\in M_1$ we denote the (oriented) orthonormal frame \(\partial_{x^1}, \partial_{x^2}\) by $e_1,e_2\in T_t M_1$ and consider the following $\Lambda-$valued smooth mappings:
	\begin{equation}
		\begin{split}
			\|d\varphi_t\|^2\colon  (-\epsilon,\epsilon) &\longrightarrow \Lambda^{\!+}\\
			s &\mapsto \delta^{ij}\varphi^\ast\!g_2(d\Phi(s,t)e_i,d\Phi(s,t)e_j)\,,\\[.2cm]
			{\big(\psi_t,\dda\psi_t\big)}_{\!\mbox{\tiny E}}\colon (-\epsilon,\epsilon) &\longrightarrow \Lambda^{\!+}\\
			s &\mapsto \omega^\ast_1({\goth s}^i,\gamma^k{\goth s}^j)\, \varphi^\ast\!g_2(\Psi_i(s,t),d\Psi_j(s,t)e_k)\,,
		\end{split}
	\end{equation}
	whereby to first order for all $v\in T_t M_1$
	\begin{equation}
		\begin{split}
			d\Phi(s,t)v &\coloneqq d\varphi(t)v + sd\psi({\goth s})(t)v + o(s)\in T_{\varphi(t)}M_2\,,\\[,2cm]
			\Psi(s,t) &\coloneqq \psi(t) + s\gamma(d\varphi(t)){\tilde{\goth s}}(t) + o(s)\in E_t^-\,.
		\end{split}
	\end{equation}

	Accordingly, one obtains
	\begin{equation}
		\begin{split}
			\frac{d\|d\varphi_t\|^2}{ds}(s)|_{s=0} &= -2\big(\psi,\triangle\varphi(t)\ot{\tilde{\goth s}}(t)\big)_{\!\mbox{\tiny E}} + \Div{\goth J}_ {\mbox{\tiny$\varphi$}}(t)\\[.2cm]
			\frac{d(\psi_t,\dda\psi_t)}{ds}(s)|_{s=0} &= 2\big(\psi,\triangle\varphi(t)\ot{\tilde{\goth s}}(t)\big)_{\!\mbox{\tiny E}} - 2\big(\psi(t),\gamma^k\gamma(d\varphi(t))d{\goth s}(t)e_k(x)\big)_{\!\mbox{\tiny E}}\\
				&\,+ 2\,\Div{\goth J}_{\mbox{\tiny$\psi$}}(t)\,,
		\end{split}
	\end{equation}
	with the even vector fields ${\goth J}_{\mbox{\tiny$\phi$}},\,{\goth J}_{\mbox{\tiny$\psi$}}\in\Gamma(M_1,TM_1)$ being given by
	\begin{equation}
		\begin{split}
			{\goth J}_{\mbox{\tiny$\varphi$}} &\coloneqq 2\big(\psi,\grad\varphi\!\ot\!{\goth s}\big)_{\!\mbox{\tiny E}}\,,\\
			{\goth J}_{\mbox{\tiny$\psi$}} &\coloneqq 4\big(\psi,\Theta^\sharp\gamma(d\varphi){\goth s}\big)_{\!\mbox{\tiny E}}\,.
		\end{split}
	\end{equation}

	Therefore,
	\begin{equation}
		\begin{split}
			\frac{d\|d\varphi_t\|^2}{ds}(s)|_{s=0} +\frac{d(\psi_t,\dda\psi_t)}{ds}(s)|_{s=0} &= -2\big(\psi(t),\gamma^k\gamma(d\varphi(t))d{\goth s}(t)e_k(t)\big)_{\!\mbox{\tiny E}}\\
			&\quad+ 2\,\Div{\goth J}_{\mbox{\tiny susy}}\,,
		\end{split}
	\end{equation}
	with the ``susy-current'' ${\goth J}_{\mbox{\tiny susy}}\in\Gamma(M_1,TM_1)$ reading
	\begin{equation}
		\begin{split}
			{\goth J}_{\mbox{\tiny susy}} &\coloneqq
			\big(\psi,\grad\varphi\!\ot\!{\goth s}\big)_{\!\mbox{\tiny E}} +
			4\big(\psi,\Theta^\sharp\gamma(d\varphi){\goth s}\big)_{\!\mbox{\tiny E}}\,.
		\end{split}
	\end{equation}

	Since
	\begin{equation}
		2\gamma^k\gamma(d\varphi)d{\goth s}(e_k) = {\goth q}(d{\goth s})\grad\varphi\,,
	\end{equation}
	the statement follows from~\eqref{eq:PQProjectors}.
	Indeed, with \(\theta=dz\), for the holomorphic coordinate \(z=x^1+\ic x^2\) and \(\mathfrak{s}=u\mathfrak{w}\), it holds that
	\begin{equation}
		0=\mathfrak{q}\left(d\mathfrak{s}\right) = \mathfrak{q}\left(\left(\partial_{z}u\right)\theta\otimes\mathfrak{w} + \left(\partial_{\overline{z}}u\right)\overline{\theta}\otimes\mathfrak{w}\right) =  \left(\partial_{\overline{z}}u\right)\overline{\theta}\otimes\mathfrak{w}
	\end{equation}
	Notice, that even though the last equation is obtained using particular local coordinates, due to conformal invariance it glues to the global condition, that \(\mathfrak{s}\) be holomorphic.
\end{proof}

The statement of the Proposition~\eqref{propSDH} can be generalized, actually.
By a similar but (much) more involved calculation (analogous to the corresponding calculation presented in all details in~\cite{EK-DR}, see also~\cite{BdVH-LSRIASS}) one may finally prove the following
\begin{proposition}
	Let, again, $(M_2,g_2)$ be flat. Also, let ${\goth s}\in\Gamma(M_1,\bbS^-)$ be an arbitrary odd spinor field and ${\tilde{\goth s}}\in\Gamma(M_1,\bbS^\ast)$ its symplectic dual.
	The functional
	\begin{gather}
	\label{SRS}
		\mathcal{A}_{\mbox{\tiny SRS}}(\varphi,\psi,\chi,g_1) \coloneqq\\
		\!\int_{M_1}\!\!\Big(\|d\varphi\|^2 + \big(\psi,\dda\psi\big)_{\!\mbox{\tiny E}} + \;\big(\chi,{\goth q}(\chi)\big)_{\mbox{\tiny$T^\ast\!M\!\ot\!\bbS$}}\big(\psi,\psi\big)_{\!\mbox{\tiny E}} + 4\,\big({\goth q}(\chi)(\grad\varphi),\psi\big)_{\!\mbox{\tiny E}}\Big)dvol(g_1)\,,
	\end{gather}
	with $\chi\in\Omega^1(M_1,\bbS^-)$, is stationary to first order with respect to
	\begin{align}
	\label{susytraf2a}
		\delta_{\goth s}\varphi &\coloneqq \psi({\goth s})\in\Gamma(M_1,\varphi^\ast TM_2)\,,\\
	\label{susytraf2b}
		\quad \delta_{\goth s}\psi &\coloneqq \delta_\gamma\big(d\varphi - ev(\psi,\chi)\big){\tilde{\goth s}}\in \Gamma(M_1,E^-)\,,\\
	\label{susytraf2c}
		\delta_{\goth s} e_k &\coloneqq -4\omega^\ast_1(\delta_{\mbox{\tiny$\Theta^\sharp$}}{\goth s},\chi(e_k))\in \Gamma(M_1,TM_1)\qquad(k=1,2)\,,\\
	\label{susytraf2d}
		\delta_{\goth s}\chi &\coloneqq d_{\mbox{\tiny A}}{\goth s}\in\Omega^1(M_1,\bbS^-\big)\,,
	\end{align}
	if and only if the torsion factorizes as
	\begin{equation}
		\begin{split}
			A &= \omega_1(\delta_{\mbox{\tiny$\gamma$}}(\chi),\chi)\,.
		\end{split}
	\end{equation}
\end{proposition}

The realization of anti-commutative spinors~\eqref{oddspinor} by Grassmann extension of ordinary spinors thus allows a ``semi-classical'' interpretation of the functional of super harmonic maps and its symmetries, including supersymmetry.
The prize to be paid is to deal with functionals taking values in a non-commutative ring instead of being real-valued.
This is rather close, indeed, to super geometry.
Indeed, one can show that the action functional \(\mathcal{A}_{SRS}\) from Equation~\eqref{SRS} is equivalent to the action functional \(\mathcal{I}_{SDH}\)-studied in~\cite{JKT-SRSMG} under the assumption of a trivial family of super Riemann surfaces.
It seems unavoidable to use the full language of super geometry, in particular ringed spaces involving anti commutative variables, and reminiscent to the supersymmetry transformations, which seem to have a clear geometrical interpretation only within the realm of super geometry.

In contrast, the functional~\eqref{sadhm1} and the factorization~\eqref{torfac} of the torsion was shown to be already well-motivated by pure symmetry considerations also within the realm of ordinary differential geometry.

\printbibliography

\end{document}